\documentclass[11pt]{article}
\usepackage{amssymb,graphicx,amscd,color,amsmath,amsfonts,amssymb,geometry,latexsym,amsthm,units}
\usepackage[initials]{amsrefs}
\newtheorem{theorem}{Theorem}[section]

\newtheorem{corollary}[theorem]{Corollary}

\newtheorem{remark}[theorem]{Remark}

\newtheorem{lemma}[theorem]{Lemma}
\newtheorem{definition}[theorem]{Definition}

\begin{document}
\title{\textsc{Subspaces of maximal dimension contained in $L_{p}(\Omega) - \textstyle\bigcup\limits_{q<p}L_{q}(\Omega)$}}
\date{}
\author{G. Botelho\thanks{Supported by CNPq Grant 306981/2008-4 and Fapemig Grant PPM-00295-11.} \and D. Cariello \and V. V. F\'avaro\thanks{Supported by FAPEMIG Grant CEX-APQ-00208-09}\and D. Pellegrino\thanks{Supported by CNPq Grant 301237/2009-3} \and J. B. Seoane-Sep\'ulveda\thanks{Supported by by the Spanish Ministry of Science and Innovation, grant MTM2009-07848.\hfill\newline2010
Mathematics Subject Classification: 46E30, 46A45, 46A16, 46B45.\hfill\newline
Keywords: infinite measure space, lineability, spaceability, $L_p$-space.}}
\maketitle
\begin{abstract}
Let $(\Omega,\Sigma,\mu)$ be a measure space and $1< p < +\infty$. In this paper we show that, under quite general conditions, the set
$L_{p}(\Omega) - \bigcup\limits_{1 \leq q < p}L_{q}(\Omega)$ is maximal spaceable, that is, it contains (except for the null vector) a
closed subspace $F$ of $L_{p}(\Omega)$ such that $\dim(F) = \dim\left(L_{p}(\Omega)\right)$. We also show that if those conditions are not fulfilled, then even the larger set $L_p(\Omega) - L_q(\Omega)$, $1 \leq q < p$, may fail to be maximal spaceable. The aim of the results presented here is, among others, to generalize all the previous work (since the 1960's) related to the linear structure of the sets $L_{p}(\Omega) - L_{q}(\Omega)$ with $q < p$ and $L_{p}(\Omega) - \bigcup\limits_{1 \leq q < p}L_{q}(\Omega)$. 
\end{abstract}
\section{Introduction and Preliminaries}
This paper is devoted to the search for what are often large linear spaces of functions enjoying certain {\em special} properties. Let $E$ be a topological vector space and let us consider such an special property $\mathcal P$. We say that the subset $M$ of $E$ formed by the vectors in $E$ which satisfy $\mathcal P$ is {\em spaceable} if $M \cup \{0\}$ contains a {\em closed} infinite dimensional subspace. The set $M$ shall be called {\em lineable} if $M \cup \{0\}$ contains an infinite dimensional linear (not necessarily closed) space.

These notions of lineability and spaceability were originally coined by V. Gurariy and they first appeared in \cite{AGS_2005,S_2006}. After the first appearance of this notion, a trend has started in which many authors became interested in this topic. Some examples of this fact are, for instance, the recent works by R. Aron (see, e.g. \cite{AGM_2001,AGS_2005,AGPS_2009,APS_2006,AS_2007}), P. Enflo (see \cite{EGS_2012}), V. Gurariy (\cite{AGS_2005,EGS_2012,GQ_2004}) or G. Godefroy (\cite{BG_2006}), just to cite some. It is important to recall that, prior to the publication of \cite{AGS_2005,S_2006}, some authors (when working with infinite dimensional spaces) already found large linear structures enjoying these type of so called ``special'' properties (even though they did not explicitly used terms like lineability or spaceability). Probably the very first result illustrating this was due to B. Levine and D. Milman (1940, \cite{LM_1940}):
\begin{theorem}
The subset of $\mathcal{C}[0,1]$ of all functions of bounded variation is not spaceable.
\end{theorem}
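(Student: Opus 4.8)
The plan is to argue by contradiction. Suppose the set of functions of bounded variation in $\mathcal C[0,1]$ is spaceable, i.e.\ there is a closed infinite-dimensional subspace $F$ of $\mathcal C[0,1]$ all of whose nonzero elements have bounded variation; since $0$ is of bounded variation, $F\subseteq BV[0,1]$. Recall that $BV[0,1]$ is a Banach space under $\|f\|_{BV}:=|f(0)|+V(f)$, where $V(f)$ denotes the total variation, and that $\|f\|_\infty\le\|f\|_{BV}$ for every $f$. The first step is to apply the Closed Graph Theorem to the inclusion $j\colon(F,\|\cdot\|_\infty)\to(BV[0,1],\|\cdot\|_{BV})$: if $f_n\to f$ uniformly in $F$ and $f_n\to h$ in $\|\cdot\|_{BV}$, then $f_n\to h$ uniformly as well, so $f=h$; hence $j$ has closed graph and is bounded. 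Therefore there is a constant $C>0$ with $V(f)\le C\|f\|_\infty$ for every $f\in F$; equivalently, the supremum norm and the variation norm are equivalent on $F$, so $F$ is also a closed infinite-dimensional subspace of $(BV[0,1],\|\cdot\|_{BV})$.

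It remains to show that this is incompatible with $\dim F=\infty$. I would isolate the claim: \emph{if $G$ is an infinite-dimensional subspace of $\mathcal C[0,1]$, then $\sup\{V(f):f\in G,\ \|f\|_\infty\le1\}=+\infty$}; applied to $G=F$, this contradicts the bound just obtained. To prove the claim it suffices, for each $N$, to produce $f\in G$ with $\|f\|_\infty\le1$ and $V(f)>N$. The natural device is interpolation: choose points $0\le t_1<\dots<t_{2N}\le1$ (for $2N$ linearly independent elements of $G$ the $t_i$ can be chosen so that evaluation at them is onto $\mathbb R^{2N}$), then select $f\in G$ whose values at the $t_i$ alternate in sign and stay bounded away from $0$ in modulus, so that $V(f)\ge\sum_{i=1}^{2N-1}|f(t_{i+1})-f(t_i)|$ becomes as large as desired. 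The delicate point is to do this while keeping $\|f\|_\infty$ bounded, and one cannot simply take $f$ to be an absolutely convergent series $\sum_n a_ng_n$ of elements of $G$, because then $V(f)\le\sum_n|a_n|V(g_n)$ would be finite — the hypothesis on $G$ sabotages that approach. Instead one builds the oscillating $f$ by a greedy construction, at each stage working inside a finite-codimensional (hence still infinite-dimensional) subspace of $G$ obtained by prescribing the values already fixed, using Riesz-type lemmas to control the supremum norm and compactness of $[0,1]$ to cluster the interpolation nodes.

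The main obstacle is precisely this last step: manufacturing large \emph{oscillation}, hence large variation, of a function whose supremum norm is under control. Pointwise-limit (Helly selection) arguments do not by themselves close the gap, since they provide only pointwise and not uniform information, and — as noted — naive series constructions are self-defeating under the standing hypothesis. For the record, a useful structural fact for a softer packaging of the argument is that $BV[0,1]\cong\mathbb R\oplus M[0,1]$, where $M[0,1]=\mathcal C[0,1]^{*}$ is an $L_1$-space; hence $BV[0,1]$, and so $F$ (by the norm equivalence), is weakly sequentially complete and contains no copy of $c_0$, which one can feed into the structure theory of subspaces of $\mathcal C[0,1]$. Either way, the crux of the proof is the passage from ``$V(f)\le C\|f\|_\infty$ for all $f\in F$'' to an outright contradiction with the infinite-dimensionality of $F$.
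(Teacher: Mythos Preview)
The paper does not prove this theorem: it is quoted in the introduction as a 1940 result of Levine and Milman and is given no proof in the paper. So there is no ``paper's own proof'' to compare your attempt against; I can only assess your argument on its merits.

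Your reduction via the Closed Graph Theorem to the inequality $V(f)\le C\|f\|_\infty$ for all $f\in F$ is correct and is exactly the standard opening move. The difficulty, as you yourself say, is the passage from this inequality to a contradiction with $\dim F=\infty$, and your write-up does not actually carry out either of the two routes you sketch. The oscillation/interpolation idea is in the right spirit, but the heart of the matter --- producing, inside $F$, a function of sup norm at most $1$ with variation exceeding $N$ --- is left as a programme (``greedy construction \dots Riesz-type lemmas \dots compactness of $[0,1]$'') rather than an argument; controlling $\|f\|_\infty$ while forcing many oscillations is precisely the nontrivial part, and nothing you wrote shows how the inductive step goes through.

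The structural shortcut you propose at the end does not close the gap either. You correctly note that $BV[0,1]\cong\mathbb{R}\oplus M[0,1]$ is an abstract $L_1$-space, so $F$ is weakly sequentially complete and contains no copy of $c_0$. But this alone cannot contradict $\dim F=\infty$: $\mathcal{C}[0,1]$ is isometrically universal for separable Banach spaces, so it contains closed infinite-dimensional subspaces isomorphic to $\ell_2$ (indeed to any separable reflexive space), and these are weakly sequentially complete and $c_0$-free. Thus ``feed into the structure theory of subspaces of $\mathcal{C}[0,1]$'' is not a proof --- some further use of the specific link between the two embeddings (namely that the $BV$-embedding is $f\mapsto df$, the distributional derivative of the very function sitting in $\mathcal{C}[0,1]$) is needed, and that further use \emph{is} the content of the Levine--Milman theorem.

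In short: the proposal correctly isolates the crux but leaves it unresolved; as written it is an outline with a genuine gap at the decisive step.
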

Later, the following analogue of this previous result was proved by V. Gurariy (1966, \cite{G_1966}):
\begin{theorem}
The set of everywhere differentiable functions on $[0,1]$ is not spaceable.
\end{theorem}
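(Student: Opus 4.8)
I would argue by contradiction: suppose $F\subseteq\mathcal C[0,1]$ is a closed infinite-dimensional subspace whose nonzero elements are all differentiable at every point of $[0,1]$. The first step is a Baire category argument \emph{inside the Banach space $F$} that upgrades ``differentiable'' to a \emph{uniform} Lipschitz estimate on a subinterval. Fix any nondegenerate subinterval $I\subseteq[0,1]$; for rationals $p<q$ with $[p,q]\subseteq I$ and $M\in\mathbb N$ set
\[
F^{[p,q]}_M=\bigl\{g\in F:\ |g(x)-g(y)|\le M|x-y|\ \text{for all}\ x,y\in[p,q]\bigr\}.
\]
Each $F^{[p,q]}_M$ is closed, and $F$ is their union: for $g\in F$ the derivative $g'$ is of Baire class one, hence bounded on some subinterval of $I$ (take a point of continuity of $g'$ in $\operatorname{int}(I)$), so $g$ is Lipschitz there by the mean value theorem. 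By Baire's theorem some $F^{[p,q]}_M$ contains a ball $B_F(g_0,\rho)$, and writing $g=\tfrac{2\|g\|}{\rho}\bigl((g_0+\tfrac{\rho}{2\|g\|}g)-g_0\bigr)$ for $g\neq0$ (both parenthesized terms lying in $F^{[p,q]}_M$) one obtains: \emph{for every nondegenerate subinterval $I$ there are a nondegenerate closed $[a,b]\subseteq I$ and $L>0$ with $|g(x)-g(y)|\le L\|g\|_\infty|x-y|$ for all $g\in F$ and $x,y\in[a,b]$.} Hence the union $\mathcal O$ of all such ``Lipschitz intervals'' is open and dense in $[0,1]$; put $E:=[0,1]\setminus\mathcal O$.

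\emph{Case $E=\varnothing$.} Then finitely many Lipschitz intervals cover $[0,1]$, and since each $g\in F$ is differentiable throughout $[0,1]$ with $|g'|$ bounded by the corresponding constant on each of them, a single $L^\ast$ satisfies $|g'|\le L^\ast\|g\|_\infty$ on $[0,1]$; thus every $g\in F$ is $L^\ast\|g\|_\infty$-Lipschitz on $[0,1]$, the closed unit ball of $F$ is uniformly bounded and equicontinuous, hence relatively compact in $\mathcal C[0,1]$ by Arzel\`a--Ascoli, hence compact since $F$ is closed, so $\dim F<\infty$ by Riesz's lemma --- a contradiction.

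\emph{Case $E\neq\varnothing$.} Fix $x_0\in E$. The difference-quotient functionals $Q_{x,h}(g)=\bigl(g(x+h)-g(x)\bigr)/h$ are bounded on $F$, and since $Q_{x,1/n}(g)\to g'(x)$ for each $g$, the uniform boundedness principle shows $\ell_x\colon g\mapsto g'(x)$ is a bounded functional on $F$ for every $x$. Re-running the Baire argument with the (closed) sets $\{g\in F:\ |g'|\le M\ \text{on}\ (x_0-\delta,x_0+\delta)\}$ shows that if every $g\in F$ had $g'$ bounded near $x_0$, then a neighbourhood of $x_0$ would be a Lipschitz interval, i.e.\ $x_0\in\mathcal O$ --- impossible; so there is $g_0\in F$ with $\limsup_{x\to x_0}|g_0'(x)|=\infty$, and in fact $\sup\{\|\ell_x\|:|x-x_0|<\delta\}=\infty$ for every $\delta>0$. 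My plan to finish is to combine this explosion of slopes near $x_0$ with $\dim F=\infty$: by a gliding-hump selection I would pick normalized $g_n\in F$ whose derivatives are very large at points $x_n\to x_0$, with the humps of the earlier $g_i$ negligible near the later ones, and then show that a suitable series $g=\sum_n c_n g_n\in F$ has non-convergent difference quotients at $x_0$, contradicting the differentiability of $g$.

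\textbf{Main obstacle.} Everything through Case $E=\varnothing$ is routine; the real difficulty is Case $E\neq\varnothing$, i.e.\ passing from ``uniformly Lipschitz on \emph{some} subinterval'' to a global conclusion. Differentiable functions need not be Lipschitz (e.g.\ $x^2\sin(1/x^2)$), the first Baire argument controls only a subinterval, and near a point of $E$ the elements of $F$ may be uncooperative --- some $g\in F$ may even have derivative discontinuous there --- so the gliding-hump construction must be handled with care, the delicate point being to prevent the steep climbs of the chosen $g_n$ from cancelling in the sum. That is the step I expect to cost the most effort.
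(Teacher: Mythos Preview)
The paper does not prove this theorem: it is quoted in the introduction as a classical result of Gurariy \cite{G_1966}, with a citation and no argument. There is therefore no ``paper's own proof'' to compare your proposal against; I can only assess the proposal on its own merits.

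Your Baire-category reduction and the handling of Case~$E=\varnothing$ are correct and clean. You are also right that Case~$E\neq\varnothing$ is where the real work lies. But the gliding-hump sketch you give does not yet contain the missing idea. The quantities you control are $|g_n'(x_n)|$ for $x_n\to x_0$, and a large derivative at $x_n$ governs difference quotients \emph{based at $x_n$}, not at $x_0$; there is no mechanism in your outline for turning steep slopes near $x_0$ into oscillation of $(g(y)-g(x_0))/(y-x_0)$. In fact a Baire argument parallel to your first one --- applied to the closed sets
\[
\bigl\{g\in F:\ |g(y)-g(x_0)|\le M\,|y-x_0|\ \text{for all }y\text{ with }0<|y-x_0|<\delta\bigr\},
\]
which cover $F$ because every $g\in F$ is differentiable at $x_0$ --- shows that the difference quotients from $x_0$ are \emph{uniformly} bounded on the unit ball of $F$ in some neighbourhood of $x_0$. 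So the ``explosion of slopes near $x_0$'' you produce cannot be witnessed by quotients from $x_0$, only by quotients between nearby pairs of points; converting the latter into non-differentiability at $x_0$ of a specific $g=\sum_n c_n g_n$ is precisely the step that requires a genuine new idea, and your proposal does not supply one. The cancellation issue you flag is real, but it is downstream of this more basic gap.
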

On the other hand (see also \cite{G_1966}),
\begin{theorem}
There exist closed infinite-dimensional subspaces of $\mathcal{C}[0,1]$ all of whose members are differentiable on
$(0,1)$.
\end{theorem}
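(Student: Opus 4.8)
I plan to produce such an $F$ explicitly as a closed subspace isometric to $c_{0}$. First I would fix a sequence of pairwise disjoint closed subintervals $I_{n}=[a_{n},b_{n}]\subset(0,1)$ with $b_{n}\to 1$, so that every compact subset of $[0,1)$ meets only finitely many of the $I_{n}$. For each $n$ I choose $\varphi_{n}\in C^{\infty}[0,1]$ with $\mathrm{supp}\,\varphi_{n}\subset I_{n}$, $0\le\varphi_{n}\le 1$, and $\varphi_{n}(t_{n})=1$ for some $t_{n}\in I_{n}$; in particular $\|\varphi_{n}\|_{\infty}=1$. Because the supports are pairwise disjoint, for every finitely supported scalar sequence $(c_{n})$ one has $\bigl\|\sum_{n}c_{n}\varphi_{n}\bigr\|_{\infty}=\sup_{n}|c_{n}|$, so the map sending the $n$-th unit vector to $\varphi_{n}$ is an isometry of the space of finitely supported sequences into $\mathcal{C}[0,1]$. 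I then let $F$ be the closed linear span of $\{\varphi_{n}:n\in\mathbb{N}\}$ in $\mathcal{C}[0,1]$; it is closed by definition and infinite-dimensional since the $\varphi_{n}$ are linearly independent.

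The next step is to describe $F$ concretely. Evaluating at the points $t_{n}$ recovers the coefficients, $\bigl(\sum_{m}c_{m}\varphi_{m}\bigr)(t_{n})=c_{n}$, which makes it routine to check that a uniform limit of finite combinations $g_{k}=\sum_{m}c^{(k)}_{m}\varphi_{m}$ corresponds to a Cauchy, hence convergent, sequence $(c^{(k)})$ in $c_{0}$. Consequently $F=\bigl\{\sum_{n=1}^{\infty}c_{n}\varphi_{n}:(c_{n})\in c_{0}\bigr\}$, the representation is unique, and $\bigl\|\sum_{n}c_{n}\varphi_{n}\bigr\|_{\infty}=\sup_{n}|c_{n}|$; that is, $F$ is isometrically isomorphic to $c_{0}$.

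Finally I would verify that every $f\in F$ is differentiable on $(0,1)$. Write $f=\sum_{n}c_{n}\varphi_{n}$ with $(c_{n})\in c_{0}$. Given $\delta\in(0,1)$, only finitely many intervals $I_{n}$ meet $[0,1-\delta]$, so on $[0,1-\delta]$ the function $f$ coincides with a finite sum of the $C^{\infty}$ functions $c_{n}\varphi_{n}$ and is therefore $C^{\infty}$ there; letting $\delta\to 0$ shows $f$ is $C^{\infty}$, in particular differentiable, on all of $[0,1)$, hence on $(0,1)$. (Continuity of $f$ at $1$, needed to know $f\in\mathcal{C}[0,1]$, follows from $c_{n}\to 0$, since for $t$ near $1$ one has $|f(t)|\le\sup\{|c_{n}|:I_{n}\cap[t,1]\neq\emptyset\}$.) I do not anticipate a real obstacle: the only delicate point is the identification of the closed span in the second step — one must ensure that no non-differentiable function enters $F$ as a uniform limit of finite combinations — and this is exactly what the disjointness of the supports together with the accumulation of the $I_{n}$ only at the boundary point $1$ is designed to guarantee. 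As an alternative, structurally different route, one could take $F=\overline{\mathrm{span}}\{t^{\lambda_{n}}:n\in\mathbb{N}\}$ for a lacunary exponent sequence $\lambda_{n}\uparrow\infty$ with $\sum_{n}1/\lambda_{n}<\infty$: by the Clarkson–Erdős theorem every element of such a Müntz space is analytic on $[0,1)$, while Müntz's theorem ensures that it is an infinite-dimensional proper closed subspace of $\mathcal{C}[0,1]$.
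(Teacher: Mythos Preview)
The paper does not actually prove this statement; it is quoted in the introduction as a classical result of Gurariy (1966) and carries only the reference \cite{G_1966}, with no argument given. So there is no ``paper's proof'' to compare against.

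That said, your construction is correct and is in fact one of the standard ways to exhibit such a subspace. The key points---disjointness of the supports giving the isometry with $c_{0}$, the accumulation of the $I_{n}$ only at the endpoint $1$ so that on each $[0,1-\delta]$ the sum is finite, and the condition $(c_{n})\in c_{0}$ guaranteeing continuity at $1$---are all handled properly. The identification of the closed span with $\{\sum_{n}c_{n}\varphi_{n}:(c_{n})\in c_{0}\}$ is immediate once you observe that the map $(c_{n})\mapsto\sum_{n}c_{n}\varphi_{n}$ is an isometry of $c_{0}$ into $\mathcal{C}[0,1]$ (the series converges uniformly since $\sup_{N\le n\le M}|c_{n}|\to 0$), so its range is closed and contains the finite sums densely. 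Your alternative via the Clarkson--Erd\H{o}s theorem for lacunary M\"untz systems is also valid and is closer in spirit to what Gurariy himself used; it has the advantage of giving analyticity on $[0,1)$ rather than mere $C^{\infty}$ smoothness, at the cost of invoking a nontrivial classical result.
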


Within the context of subsets of continuous functions, in 1966 V. Gurariy \cite{G_1991} showed that the set of continuous nowhere differentiable functions on $[0,1]$ is lineable. Soon after, V. Fonf,  V. Gurariy and M. Kade\v{c} \cite{FGK_1999} showed that the set of continuous nowhere differentiable functions on $[0,1]$ is spaceable in $\mathcal{C}[0,1]$. Actually, much more is known about this set. L. Rodr\'{\i}guez-Piazza \cite{R_1995} showed that the space constructed in \cite{FGK_1999} can be chosen to be isometrically isomorphic to any separable Banach space. More recently, S. Hencl \cite{H_2000} showed that any separable Banach space is isometrically isomorphic to a subspace of $\mathcal{C}[0,1]$ whose non-zero elements are nowhere approximately differentiable and nowhere H\"{o}lder. Another set that has also attracted the attention of several authors is the set of differentiable nowhere monotone functions on $\mathbb{R}$, which was proved to be lineable (see, e.g., \cite{AGS_2005,GMSS_2010}). We refer the interested reader to \cite{AGM_2001,AGZ_2000,APS_2006,AS_2007,BBFP_2012,BDP_2009,BMP_2010,BFPS_2012,G_2011,GMPS_2012,GMS_2010,GMS_2011,GGMS_2010,JMS_2012,PT_2009} for recent results and advances in this topic of lineability and spaceability, where many more examples can be found and techniques are developed in several different frameworks.

Here we shall focus on another class of function spaces, namely $L_p$ spaces, and more particularly on the sets of the form $L_p(\Omega)- \bigcup\limits_{1 \leq q < p}L_q(\Omega)$. The study of structural properties of subspaces of $L_p$ spaces is a classical topic in Banach space theory, dating back to the early days of the theory (see, e.g., \cite{livroBanach, Banach-Mazur}) up to the present days (see, e.g., \cite{BFPS_2012, BO_2012, HOS}). First of all, let us provide a clear summary and chronological overview of the series of spaceability results in this direction throughout the years.
\begin{enumerate}

\item H. Rosenthal (1968, \cite{R_1968}) showed that $c_0$ is {\it quasi-complemented} in $\ell_\infty$ (a closed subspace $Y$ of a Banach space $X$ is {\it quasi-complemented} if there is a closed subspace $Z$ of $X$ such
that $Y\cap Z=\{0\}$ and $Y+Z$ is dense in $X$); this clearly implies that $\ell_\infty - c_0$ is spaceable.

\item Later, Garc\'{\i}a-Pacheco, Mart\'{\i}n and Seoane-Sep\'ulveda proved (2009, \cite{GMS_2009}) that $\ell_\infty(\Gamma) - c_0(\Gamma)$ is spaceable for every infinite set $\Gamma$. Although it is interesting to recall that J. Lindenstrauss (1968, \cite{L_1968}) proved that, if $\Gamma$ is uncountable, then $c_0(\Gamma)$ is not quasi-complemented in $\ell_\infty(\Gamma)$.

\item In (2008, \cite{MPPS_2008}), Mu\~noz-Fern\'andez, Palmberg, Puglisi and Seoane-Sep\'ulveda proved that if $I$ is a bounded interval and $q > p \ge 1$, then $L_p(I)- L_q(I)$ is $\mathfrak{c}$-lineable. In this same paper it is proved that both, $\ell_p - \ell_q$ and $L_p(J)- L_q(J)$, are $\mathfrak{c}$-lineable for any unbounded interval $J$ and for $p>q\ge 1$.

\item One year later (2009, \cite{AGPS_2009}), Aron, Garc\'{\i}a-Pacheco, P\'erez-Garc\'{\i}a and Seoane-Sep\'ulveda showed that the linear subspaces constructed in \cite{MPPS_2008} can be chosen to be dense.

\item Bernal-Gonz\'alez (2010, \cite{Bernalstudia2010}) provided a series of conditions from which one can obtain (maximal) lineability (and dense-lineability) of the set of functions in $L_p(X, \mu)$ that are not in $L_q(X, \mu)$, where $1 \le q \neq p < \infty$ and $\mu$ denotes a regular Borel measure on a topological space $X$.
\item In (\cite{GPS_2010}, 2010) Garc\'{\i}a-Pacheco, P\'erez-Eslava and Seoane-Sep\'ulveda  proved that if $\left(\Omega, \Sigma, \mu\right)$ is a measure space such that there exists $\varepsilon > 0$ and an infinite family $\left(A_n\right)_{n \in \mathbb{N}} \subset \Sigma$ of pairwise disjoint measurable sets with $\mu\left(A_n\right) \geq \varepsilon$ for all $n \in \mathbb{N}$, then $\bigcap\limits_{p=1}^{\infty}\left( L_{\infty}(\Omega,\Sigma,\mu) - L_p (\Omega,\Sigma,\mu)\right)$ is spaceable in $L_{\infty}(\Omega,\Sigma,\mu)$ (see  \cite[Theorem 2.6]{GPS_2010}).

\item The results above, somehow, kept {\em evolving} and, in (\cite{BDFP_2011}, 2011), Botelho, Diniz, F\'avaro and Pellegrino proved (for any  Banach space $X$) that for large classes of Banach (and even quasi-Banach) spaces $E$ of $X$-valued sequences, the sets $E - \bigcup\limits_{q\in \Gamma} \ell_q(X)$ (where $\Gamma \subset [0,\infty)$), and $E - c_0(X)$ are both spaceable in $E$.

\item Next, and as a consequence of a lecture delivered by V. F\'avaro at an international conference held in Valencia (Spain) in 2010, R. Aron asked whether the result above (\cite[Corollary 1.7]{BDFP_2011}) would hold for $L_p$-spaces. This question was answered in the positive (and independently) in \cite{BFPS_2012,BO_2012}. More precisely, in \cite{BO_2012} Bernal-Gonz\'alez and Ord\'o\~nez Cabrera provided a series of conditions on a measure space $(X, \mathcal{M},\mu)$ to ensure the spaceability of the sets $L_p(\mu,X)- \bigcup\limits_{q\in [1,p)} L_q(\mu,X)$, $L_p(\mu,X)- \bigcup\limits_{q\in [p,\infty)} L_q(\mu,X)$, and $L_p(\mu,X)- \bigcup\limits_{q\in [1,\infty)- \{p\}} L_q(\mu,X)$ (for $p\ge 1$); whereas in \cite{BFPS_2012} Botelho, F\'avaro, Pellegrino and Seoane-Sep\'ulveda obtained a quasi-Banach version of this result by proving that $L_{p}[0,1] -\bigcup\limits_{q>p} L_{q}[0,1]$ is spaceable for every $p>0$.

\item In this direction it is also crucial to mention a recent paper \cite{KT_2011}, where Kitson and Timoney provided a general result from which some of the above ones (for the normed case) can be inferred.
\end{enumerate}

At this point, and after all the invested effort for the past years in looking for the ``optimal'' results on the spaceability of the sets of the form $L_p(\Omega)- L_q(\Omega)$ with $p>q$ and $L_p(\Omega)- \bigcup\limits_{1 \leq q < p}L_q(\Omega)$, we now continue with this ongoing work and provide our contributions in the form of what it is called {\em maximal}-spaceability. In other words, given a measure space $(\Omega,\Sigma,\mu)$,

\begin{quote}
{\em When does $L_{p}(\Omega) - \bigcup\limits_{1 \leq q < p} L_{q}(\Omega)$ contain, except for the null vector, a
closed subspace $F$ of $L_{p}(\Omega)$ such that $\dim(F) = \dim\left(L_{p}(\Omega)\right)$?}
\end{quote}

Of course, for the above problem to be well-posed we should have $\mu(\Omega) = +\infty$. In order to decide whether a subspace of $L_{p}(\Omega)$ has maximal dimension or not, it is of course crucial to know the dimension of $L_{p}(\Omega)$. This paper is arranged in three main sections. In Section 2 we shall study the dimension of $L_{p}(\Omega)$ and in Section 3 we shall benefit from the results proved in Section 2 to provide quite general sufficient conditions for $L_{p}(\Omega) - \bigcup\limits_{1 \leq q < p} L_{q}(\Omega)$ to be maximal spaceable. Although the results of Section 3 cover most cases, including all common $L_p(\Omega)$ spaces and some cases never studied before, in Section 4 we shall use the results of Section 2 and Section 3 to prove that even the larger set $L_p(\Omega) - L_q(\Omega)$ with $q < p$ may fail to be maximal spaceable provided that the conditions given in Section 3 are not fulfilled. By doing this we provide an ultimate answer to the spaceability of the sets of the form $L_p- \bigcup\limits_{1 \leq q < p} L_q$ for all measure spaces we are aware of. 

Many recent results concern spaceability/maximal spaceability of complements of subspaces of topological vector spaces (sometimes complements of dense subspaces). For example, \cite{KT_2011} provides quite strong results in this line. So it is important to mention that our results on the maximal spaceability of $L_{p}(\Omega) - \bigcup\limits_{1 \leq q < p} L_{q}(\Omega)$ do not require $L_q(\Omega)$ to be a subspace of $L_p(\Omega)$ for $q < p$.

Throughout this paper, $\mathbb{K}$ shall stand for either $\mathbb{R}$ or $\mathbb{C}$, $\#A$ denotes the cardinality of the set $A$, $\aleph_{0} = \#\mathbb{N}$ and $\mathfrak{c} = \#\mathbb{R}$, the continuum. The rest of the notation shall be rather usual.

\section{Computing the dimension of $L_{p}(\Omega)$}

The aim of this section is to express the dimension of $L_{p}(\Omega)$ in terms that shall be useful in the investigation of the maximal spaceability of $L_{p}(\Omega)- \bigcup\limits_{1 \leq q < p} L_{q}(\Omega)$.

In this section $(\Omega,\Sigma,\mu)$ shall denote a measure space and $0 < p < +\infty$.

\begin{definition}\rm
\begin{enumerate}
\item[(i)] $\Sigma_{fin}:=\{A\in\Sigma: \mu(A)< +\infty\}$.
\item[(ii)] Two sets $A,B\in\Sigma_{fin}$ are equivalent, denoted $A\sim B$, if
$$\mu((A-B)\cup(B-A))=0.$$
The elements of $\nicefrac{\Sigma_{fin}}{\sim}$ are denoted by $[B],$ for $B \in \Sigma_{fin}$.
\item[(iii)] The cardinal number $\#\nicefrac{\Sigma_{fin}}{\sim}$ is called the {\it entropy} of the measure space $(\Omega,\Sigma,\mu)$ and is denoted by $ent(\Omega)$.
\item[(iv)] Given a cardinal number $\zeta$, we say that the measure space $(\Omega,\Sigma,\mu)$ is $\zeta$-{\it bounded} if, for every $A\in\Sigma_{fin}$ with positive measure, there are at most $\zeta$ subsets of $A$ with positive measure belonging to different classes of $\nicefrac{\Sigma_{fin}}{\sim}$.
\item[(v)] A set $A\in\Sigma$ is an \textit{atom} if $0<\mu(A)$ and there is no $B\in\Sigma$ such that $B\subset A$ and $0<\mu(B)<\mu(A)$.
\end{enumerate}
\end{definition}

\begin{lemma} \label{lemma1}
If $ent(\Omega)\geq\aleph_{0}$, then there are sets $(B_i)_{i \in \mathbb{N}}$ in $\Sigma_{fin}$ such that $\mu(B_{i})>0$ for every $i \in\mathbb{N}$ and $\mu\left(  B_{i} \cap B_{j}\right)  = 0$ whenever $i \neq j$.
\end{lemma}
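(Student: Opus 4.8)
The plan is to extract the countably many almost-disjoint sets directly from the hypothesis $ent(\Omega) = \#\nicefrac{\Sigma_{fin}}{\sim} \geq \aleph_0$ by a greedy recursion, using at each step the fact that there are infinitely many equivalence classes to guarantee we can always find a new ``piece'' of positive measure that is almost disjoint from what we have built so far. First I would observe that since $\nicefrac{\Sigma_{fin}}{\sim}$ is infinite, there are in particular at least two distinct classes $[A_1] \neq [A_2]$ with $\mu(A_1), \mu(A_2) > 0$ (the zero class $[{\emptyset}]$ is only one class), so that $\mu((A_1 - A_2) \cup (A_2 - A_1)) > 0$; this already shows that the measure space is not reduced to a single atom-like class and that ``splitting'' is possible. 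The key structural point is this: if $C_1, \dots, C_n \in \Sigma_{fin}$ have positive measure and we set $C = C_1 \cup \cdots \cup C_n \in \Sigma_{fin}$, then every $A \in \Sigma_{fin}$ satisfies $[A] = [A \cap C] \vee [A - C]$ in the sense that $A \sim (A\cap C) \cup (A - C)$; hence if all but finitely many classes agreed with some class of a subset of $C$, there would be only finitely many classes of subsets of $C$ and only finitely many ``new'' directions, contradicting $ent(\Omega) \geq \aleph_0$ once we also bound the classes living inside $C$.

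More precisely, the recursion runs as follows. Suppose $B_1, \dots, B_n \in \Sigma_{fin}$ have been constructed with $\mu(B_i) > 0$ and $\mu(B_i \cap B_j) = 0$ for $i \neq j$; set $C_n = B_1 \cup \cdots \cup B_n$. I claim there exists $D \in \Sigma_{fin}$ with $\mu(D) > 0$ and $\mu(D \cap C_n) = 0$; then put $B_{n+1} = D$. To prove the claim, note first that the collection $\{[E] : E \in \Sigma_{fin},\ E \subset C_n\}$ need not be finite in general, so a naive counting argument is not quite enough — but we do not need it to be: we argue by contradiction. If no such $D$ existed, then for every $A \in \Sigma_{fin}$ we would have $\mu(A - C_n) = 0$, i.e. $A \sim A \cap C_n \subset C_n$, so that $\nicefrac{\Sigma_{fin}}{\sim}$ would be in bijection with $\{[E] : E \subset C_n\}$. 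The point is then to show that the latter is countable — in fact this is where one uses that $C_n$ has finite measure. Actually, the cleanest route avoids this subtlety altogether: instead of demanding $\mu(D \cap C_n) = 0$ outright, first use $ent(\Omega) \geq \aleph_0$ to pick \emph{some} $A \in \Sigma_{fin}$ with $\mu(A - C_n) > 0$ (if every $A$ were almost contained in $C_n$, replace $C_n$ in the argument by progressively larger finite unions and recall that a strictly increasing — modulo null sets — chain $C_1 \subsetneq C_2 \subsetneq \cdots$ of finite-measure sets would itself produce the desired almost-disjoint sequence via $B_{n+1} := C_{n+1} - C_n$), and then set $D := A - C_n$, which has positive measure and satisfies $\mu(D \cap C_n) = 0$.

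I would therefore organize the final write-up around a case split. \emph{Case 1:} there is a strictly $\mu$-increasing sequence $C_1, C_2, \dots$ in $\Sigma_{fin}$ (meaning $\mu(C_{n+1} - C_n) > 0$ for all $n$); then $B_n := C_n - C_{n-1}$ (with $C_0 = \emptyset$) does the job immediately, since the $B_n$ are genuinely pairwise disjoint with positive measure. \emph{Case 2:} no such chain exists, i.e. every $\mu$-increasing chain in $\Sigma_{fin}$ stabilizes after finitely many steps. I would argue that in this case $ent(\Omega)$ is in fact finite: any finite union $C$ of positive-measure sets that cannot be strictly enlarged inside $\Sigma_{fin}$ ``absorbs'' every finite-measure set, and combined with the stabilization hypothesis this forces $\nicefrac{\Sigma_{fin}}{\sim}$ to be finite, contradicting $ent(\Omega) \geq \aleph_0$. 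So Case 2 cannot occur, and Case 1 always applies. The main obstacle — and the step deserving the most care — is exactly the verification that the absence of a strictly increasing chain forces finite entropy; this requires being careful about the distinction between ``$A$ is almost contained in $C$'' and ``$[A]$ is the class of a subset of $C$'', and about the fact that finiteness of the entropy is a statement about \emph{all} of $\Sigma_{fin}$, not just subsets of a fixed set. Everything else is a routine bookkeeping of null sets.
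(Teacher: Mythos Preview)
Your Case~1 is fine, and the overall architecture (either an infinite strictly increasing chain exists, or it does not) is sound. The gap is squarely in Case~2, and it is more serious than the null-set bookkeeping you flag. You need: \emph{if the measure algebra $\Sigma_{fin}/\!\sim$ admits no infinite strictly increasing chain, then it is finite}. Your write-up asserts this (``combined with the stabilization hypothesis this forces $\Sigma_{fin}/\!\sim$ to be finite'') but does not prove it, and your earlier attempt to show that $\{[E]:E\subset C_n\}$ is ``countable'' is both wrong (for $C_n=[0,1]$ with Lebesgue measure that set has cardinality $\mathfrak{c}$) and then patched by a circular manoeuvre (you invoke the increasing chain you are trying to build). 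The real obstruction is not null sets but \emph{atoms}: from ``no infinite chain'' you get DCC in the Boolean algebra of classes, hence every class of positive measure dominates an atom; then ACC forces the atoms below your maximal $C$ to be finitely many, and the algebra is finite. That argument is correct --- but notice that it is precisely the atom analysis you have avoided.

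The paper's proof makes that analysis the organizing principle rather than a deferred lemma: it splits on whether some $A\in\Sigma_{fin}$ of positive measure is atomless (then repeatedly halve to get a strictly \emph{decreasing} chain $A_1\supset A_2\supset\cdots$ and take $B_i=A_i\setminus A_{i+1}$), or every positive-measure set contains an atom (then build pairwise disjoint atoms $B_1,\dots,B_k$ and use the fact that a finite union of atoms has exactly $2^k$ sub-classes to force the existence of some $B$ with $\mu\bigl(B\setminus\bigcup_i B_i\bigr)>0$, hence a new atom $B_{k+1}$). So your route, once completed, converges to the same atom-counting step the paper isolates up front; the paper's dichotomy is simply a more direct packaging of the same idea.
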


\begin{proof}
Assume first that there is a set $A_{1}\in\Sigma_{fin}$ with $\mu(A_{1})>0$
and containing no atoms. Therefore $A_{1}$ is not an atom and hence there is a
set $A_{2}\subset A_{1}$ such that $0<\mu(A_{2})<\mu(A_{1})$. By the
assumption on the existence of such $A_{1}$, we have that $A_{2}$ is not an atom either.
Repeating this argument we obtain $A_{1}\supset A_{2}\supset A_{3}\ldots$ with
$0<\mu(A_{i+1})<\mu(A_{i})$ for every $i$. Defining $B_{i}=A_{i}-A_{i+1}$ we obtain $\mu(B_{i})>0$ for every $i\in\mathbb{N}$ and $\mu\left( B_{i} \cap B_{j}\right)  = 0$ for $i \neq j$.

To complete the proof, suppose now that every $B\in\Sigma_{fin}$ with $\mu(B)>0$, contains an atom.
Let $B_1\in\Sigma_{fin}$ be an atom. Suppose that we have defined pairwise disjoint atoms $B_1, \ldots, B_k \in\Sigma_{fin}$ and let us prove that there is a measurable set $B \in\Sigma_{fin}$ such that $$\textstyle\mu\left(B-\bigcup\limits_{i=1}^k (B \cap B_i) \right)>0.$$
If we suppose that $\mu(B)=\mu\left(\bigcup\limits_{i=1}^k(B \cap B_i)\right)$ for every measurable set $B$, then
$[B]=\left[\bigcup\limits_{i=1}^k(B \cap B_i)\right]$ for every measurable set $B$. In other words, every class in $\nicefrac{\Sigma_{fin}}{\sim}$
 contains a subset of $\bigcup\limits_{i=1}^kB_i$ as a representative.
Since $B_1,\ldots,B_k$ are atoms, the only subsets of $\bigcup\limits_{i=1}^kB_i$ that belong to different equivalence classes are equivalent to either $B_1,\ldots,B_k$ or unions of some of them. In this case we have $ent(\Omega)=2^k$, which is absurd. Hence there is a measurable $B$ such that $\mu(B)=\mu\left(\bigcup\limits_{i=1}^k(B \cap B_i)\right)$, that is, $\mu\left(B-\bigcup\limits_{i=1}^k(B \cap B_i)\right)>0$. So there is an atom $B_{k+1}\subset B-\bigcup\limits_{i=1}^k(B \cap B_i)$. Therefore the sets $B_1, \ldots, B_k, B_{k+1} \in\Sigma_{fin}$ are pairwise disjoint, and, in particular, $\mu\left(  B_{i} \cap B_{j}\right)  = 0$ for $i \neq j$.
\end{proof}

Since the continuum hypothesis is not required in what follows, we would rather prefer not to assume it.

\begin{theorem}\label{thmdim}~\\
{\rm (a)} If $ent(\Omega)>\mathfrak{c}$, then $\dim \left(L_{p}(\Omega)\right)=ent(\Omega)$.\\
{\rm (b)} If $\aleph_0 \leq ent(\Omega) \leq \mathfrak{c}$, then $\dim \left(L_{p}(\Omega
)\right)=\mathfrak{c}$.\\
{\rm (c)} If $ent(\Omega)
\in\mathbb{N}$, then there is $k \in\mathbb{N}$ such that $ent(\Omega)=2^{k}$ and $\dim \left(L_{p}(\Omega)\right)=k$.
\end{theorem}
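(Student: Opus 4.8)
\textbf{Proof proposal for Theorem \ref{thmdim}.}

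The plan is to compute $\dim(L_p(\Omega))$ separately in the three regimes of $ent(\Omega)$, in each case sandwiching the dimension between a lower and an upper bound. For the \emph{lower bounds} I would exhibit linearly independent families of the appropriate size built from indicator functions. For part (c), when $ent(\Omega)$ is finite, Lemma \ref{lemma1} tells us there are no infinitely many essentially disjoint positive-measure sets, so a maximal essentially disjoint family of positive-measure sets $B_1,\dots,B_m$ is finite; arguing as in the second half of the proof of Lemma \ref{lemma1}, every class in $\nicefrac{\Sigma_{fin}}{\sim}$ is represented by a union of some of the $B_i$ (otherwise we could split some $B_i$ and enlarge the family, or find a set disjoint from all of them), which forces each $B_i$ to be an atom and $ent(\Omega)=2^m$; then $\{\chi_{B_1},\dots,\chi_{B_m}\}$ is a basis of $L_p(\Omega)$ (every $L_p$ function is a.e. constant on each atom and a.e. zero off $\bigcup B_i$), so $\dim(L_p(\Omega))=m=k$ with $ent(\Omega)=2^k$.

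For parts (a) and (b), when $ent(\Omega)\ge\aleph_0$, Lemma \ref{lemma1} gives a sequence $(B_i)_{i\in\mathbb N}$ of essentially disjoint sets of positive finite measure. The normalized indicators $\chi_{B_i}/\|\chi_{B_i}\|_p$ span an isometric copy of $\ell_p$ (or the $\ell_p$-sum of the one-dimensional pieces), which is a closed subspace of $L_p(\Omega)$ of dimension $\mathfrak c$; this already yields $\dim(L_p(\Omega))\ge\mathfrak c$, covering the lower bound in (b) and the case $ent(\Omega)\le\mathfrak c$. For the lower bound in (a), $ent(\Omega)>\mathfrak c$: I would take, for each class $[B]\in\nicefrac{\Sigma_{fin}}{\sim}$ with $\mu(B)>0$, the function $\chi_B\in L_p(\Omega)$, and argue that a set of $ent(\Omega)$-many of these is linearly independent — more carefully, one should pass to a family whose span has dimension $ent(\Omega)$, e.g. by a transfinite/Zorn argument selecting, at each stage, a positive-measure set not in the span of those already chosen; this is possible precisely because $\#\nicefrac{\Sigma_{fin}}{\sim}=ent(\Omega)$ is so large, so the dimension cannot stabilize below $ent(\Omega)$. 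Hence $\dim(L_p(\Omega))\ge ent(\Omega)$.

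For the \emph{upper bounds}: in (a) and (b), a dense subset argument. Simple functions supported on sets in $\Sigma_{fin}$ with coefficients in a fixed countable dense subfield of $\mathbb K$ (e.g. $\mathbb Q$ or $\mathbb Q+i\mathbb Q$) are dense in $L_p(\Omega)$, and the number of such simple functions is at most $\aleph_0\cdot (ent(\Omega))^{<\omega}=ent(\Omega)$ when $ent(\Omega)$ is infinite (and $\mathfrak c$ when $ent(\Omega)\le\mathfrak c$, since finitely many choices from a set of size $\le\mathfrak c$ still give $\le\mathfrak c$). A Banach space with a dense subset of cardinality $\kappa$ (with $\kappa$ infinite) has dimension at most $\kappa^{\aleph_0}$; when $\kappa=ent(\Omega)>\mathfrak c$ one checks $\kappa^{\aleph_0}=\kappa$ (using $\kappa^{\aleph_0}\le(2^{\mathfrak c})^{\aleph_0}=2^{\mathfrak c}$-type reasoning, or more simply that $L_p$ embeds linearly into $\mathbb K^{(\text{dense set})}$ after choosing a countable dense sequence, giving $\dim\le\kappa^{\aleph_0}$; and $\kappa^{\aleph_0}=\kappa$ whenever $\kappa\ge\mathfrak c$ has cofinality $>\aleph_0$, while in general one reduces to $\kappa$ by the Hausdorff formula or by noting the density character itself is the relevant invariant). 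Combined with the matching lower bound this gives (a); for (b) the same counting gives $\dim\le\mathfrak c^{\aleph_0}=\mathfrak c$, matching the lower bound $\mathfrak c$.

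The main obstacle I anticipate is the upper-bound computation in part (a): passing from "$L_p(\Omega)$ has a dense set of size $ent(\Omega)$" to "$\dim(L_p(\Omega))\le ent(\Omega)$" requires the cardinal arithmetic identity $ent(\Omega)^{\aleph_0}=ent(\Omega)$ for $ent(\Omega)>\mathfrak c$, which is \emph{not} a theorem of ZFC for all such cardinals (e.g. it can fail at cardinals of countable cofinality). The honest route is to observe that the relevant bound is the \emph{density character} $d$ of $L_p(\Omega)$ rather than a crude cardinal power: a normed space satisfies $\dim \le d$ when $d$ is infinite is false in general too, but $\dim\le d^{\aleph_0}$ holds, and one shows directly that the span of the indicators $\chi_B$ (over all positive-measure classes) is already dense and has a Hamel basis of size exactly $ent(\Omega)$ by a Zorn's-lemma maximality argument inside that span — thereby getting the exact value $ent(\Omega)$ without invoking problematic cardinal exponentiation. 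I would organize part (a) around this direct basis-extraction argument rather than around counting dense subsets, and reserve the dense-subset counting for part (b) where it only needs $\mathfrak c^{\aleph_0}=\mathfrak c$, which \emph{is} a ZFC theorem.
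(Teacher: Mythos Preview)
Your parts (b) and (c) follow essentially the paper's route. The one cosmetic difference is the lower bound in (b): you embed $\ell_p$ via normalized indicators, while the paper builds a single $f=\sum_j a_j\,\mu(B_j)^{-1/p}\chi_{B_j}$ and then exhibits $\mathfrak c$ linearly independent functions $f\chi_S$ as $S$ ranges over a chain of subsets of $\mathbb N$ of cardinality $\mathfrak c$. Both yield $\dim\ge\mathfrak c$; your embedding is arguably more direct. The upper bound in (b) and the whole of (c) match the paper.

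For part (a) the paper's approach is shorter than your Zorn-based selection of independent indicators. The paper bounds the \emph{cardinality} of $L_p(\Omega)$ rather than its dimension: distinct classes give distinct indicators, so $\#L_p(\Omega)\ge ent(\Omega)$; and counting Cauchy sequences of simple functions gives $\#L_p(\Omega)\le(\#W)^{\aleph_0}$. Since then $\#L_p(\Omega)>\mathfrak c=\#\mathbb K$, dimension equals cardinality automatically. Your transfinite selection is in fact the same argument in disguise --- the reason a new indicator outside the current span always exists is precisely that a span of fewer than $ent(\Omega)$ vectors over $\mathbb K$ has cardinality $<ent(\Omega)$ once $ent(\Omega)>\mathfrak c$, while there are $ent(\Omega)$ distinct indicators. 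Phrased as you did, the step ``the dimension cannot stabilize below $ent(\Omega)$'' needs exactly this cardinality remark to be justified (witness the finite case: $2^k$ distinct indicators span a $k$-dimensional space).

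Your concern about $ent(\Omega)^{\aleph_0}=ent(\Omega)$ is legitimate and, notably, applies verbatim to the paper's own proof: the paper writes $\#\left(\nicefrac{\Sigma_{fin}}{\sim}\right)^{\mathbb N}=ent(\Omega)$ without further comment, and this identity fails in ZFC whenever $ent(\Omega)$ has countable cofinality (K\"onig's theorem). So the upper bound in (a) is genuinely delicate in both treatments. Your proposed ``direct basis-extraction'' workaround, however, is too vague to close the gap --- extracting a Hamel basis from the span of all indicators does not by itself bound $\dim L_p(\Omega)$ from above, since elements of $L_p$ need not be \emph{finite} combinations of indicators. What survives in ZFC without extra hypotheses is $ent(\Omega)\le\dim(L_p(\Omega))=\#L_p(\Omega)\le ent(\Omega)^{\aleph_0}$, with equality whenever $\mathrm{cf}(ent(\Omega))>\aleph_0$; the paper tacitly uses the equality throughout.
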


\begin{proof}
By $\chi_{A}$ we denote the characteristic function of the set $A \in \Sigma$. Let

$$W:=\left\{\sum_{i=1}^na_i\chi_{A_i}: n\in\mathbb{N},~ a_i\in\mathbb{K}\ \mbox{and}\ A_i\ \mbox{ is a representative of a class in }\ \nicefrac{\Sigma_{fin}}{\sim} \right\}.$$

By \cite[Proposition 6.7]{Folland} we know that $L_p(\Omega)=\overline{W}$. Therefore
$$\# L_p(\Omega)=\#\overline{W} \leq \#\{\mbox{Cauchy sequences in }  W\}\leq \# W^{\mathbb{N}}.$$
Assume that $ent(\Omega)\geq\mathfrak{c}$. On the one hand, $\# W= ent(\Omega)$, hence
$$\# L_p(\Omega) \leq\#\left(\nicefrac{\Sigma_{fin}}{\sim}\right)^{\mathbb{N}}=ent(\Omega).$$ On the other hand, if $A, B\in \Sigma$ are not equivalent in $\Sigma_{fin}$, then  $\chi_{A}\neq\chi_{B}$ in $L_p(\Omega)$. So  $ent(\Omega)\leq\# L_p(\Omega)$. Therefore $\# L_p(\Omega)=ent(\Omega)$.

\medskip

\noindent(a)  Since $ent(\Omega) >\mathfrak{c}$, we have $\# L_p(\Omega)=ent(\Omega)>\mathfrak{c}$. And since the cardinality of this vector space is greater than the cardinality of the scalar field, its cardinality and dimension coincide.

\medskip

\noindent (b) Since $ent(\Omega) \leq \mathfrak{c}$, again we obtain $\# L_p(\Omega)=\#W \le \mathfrak{c}$, therefore $\dim\left( L_p(\Omega)\right)\leq\mathfrak{c}$. On the other hand, since  $ent(\Omega)\geq \aleph_0$, by Lemma \ref{lemma1} there are countably many sets $B_1,B_2,\dots $ such that $\mu\left(B_i \cap B_j\right) = 0$ whenever $i \neq j$, all of them of positive measure. Choose a sequence $(a_j)_{j=1}^\infty \in \ell_p$ with $a_j > 0$ for every $j$ and define
$$f \colon \Omega \longrightarrow \mathbb{K}~,~f(x)=\sum_{i=1}^{\infty}\frac{a_j}{\mu(B_j)^{\frac{1}{p}}}\chi_{B_j}(x).$$ Notice that $\int_\Omega |f|^p\,d\mu= \sum_{i=1}^{\infty}|a_j|^p$, thus $f\in L_p(\Omega)$. Now let $\mathcal{F}$ be a totally ordered (with respect to the inclusion) family of subsets of $\mathbb{N}$ such that $\#\mathcal{F}=\mathfrak{c}$.
For example, identify $\mathbb{N}$ with $\mathbb{Q}$ and consider the family $\mathcal{F}=\{(-\infty,r)\cap\mathbb{Q}:  r\in\mathbb{R}\}$. Given $S\in\mathcal{F}$, define
$$\chi_{S}\colon \Omega\longrightarrow\mathbb{K}~,~\chi_{S}(x)=\left\lbrace\begin{array}{cc}
1 & \mbox{if \ }x\in B_j\mbox{\ with\ } j\in S\\
0 & \mbox{otherwise}
\end{array}\right.$$
Notice that $\{f\chi_S:  S\in\mathcal{F}\}$ is a linearly independent subset of $L_p(\Omega)$. Therefore $$\dim \left(L_p(\Omega)\right)\geq \# \{f\chi_S: S\in\mathcal{F}\}=\#\mathcal{F}=\mathfrak{c}.$$ It follows from the Cantor-Bernstein-Schr\"oder Theorem that $\dim\left( L_p(\Omega)\right)=\mathfrak{c}$.

\medskip

\noindent (c) Firstly let us see that, under the assumption  $ent(\Omega)\in\mathbb{N}$, every measurable set of positive measure contains an atom. In fact, otherwise we could build a sequence $A_1\supset A_2\supset\cdots$ in $\Sigma$ with $\mu(A_1)>\mu(A_2)>\cdots$. In this case, $A_i$ and $A_j$ belong to different classes whenever $i \neq j$. This is a contradiction because there are only finitely many equivalence classes.\\
\indent Let $\mathcal{S}$ be the family of all subsets of $\Sigma_{fin}$ whose elements are pairwise disjoint atoms. Consider the partial order in $\mathcal{S}$ given by the natural inclusion,
that is, for $S_{1},S_{2}\in\mathcal{S}$,
\[
S_{1}\leq S_{2}\Longleftrightarrow S_{1}\subset S_{2}.
\]
Consider a subfamily $\mathcal{S}^{\prime}=\{S_i:i\in I\}\subset \mathcal{S}$ totally ordered by inclusion, where $I$ is an index set. Hence $S=\bigcup\limits_{i\in I} S_i \in \mathcal{S}$ and $S_i\subset S$ for every $i\in I$. Then $S$ is an upper bound for $\mathcal{S}^{\prime}$. Therefore, by Zorn's Lemma there is a maximal set $U\in\mathcal{S}$ with respect to the inclusion. Since the elements of $U$ are pairwise disjoint atoms, then they are in different equivalent classes. But $ent(\Omega)
<\infty,$ so $\#U<\infty$, say $U = \{A_i: \ i=1,\ldots, k\}$ where $k\in\mathbb{N}$. Let $B\in\Sigma_{fin}$ be given. Of course $B$ can be written as the union of the following two disjoint sets: $$\textstyle B=\left(B-\bigcup\limits_{i=1}^k A_i\right)\bigcup\left(\bigcup\limits_{i=1}^k (B\cap A_i)\right).$$ Suppose that $\mu\left(B-\bigcup\limits_{i=1}^k A_i\right)>0$. In this case there is an atom $A_{k+1}$ contained in $B-\bigcup\limits_{i=1}^k A_i$ such that $A_{k+1} \cap A_i = \emptyset$ for every $i=1,\ldots, k$. Thus $\{A_{k+1}\}\cup U>U$, which contradicts the maximality of $U$. Hence $\mu\left(B-\bigcup\limits_{i=1}^k A_i\right)=0$ and so $\left[B\right]=\left[\bigcup\limits_{i=1}^k (B\cap A_i)\right]$. For each $i\in \{1,\ldots,k\}$ such that $\left[B\cap A_i\right]\neq[\emptyset]$, we have $\mu(B\cap A_i)>0$, and, since $B\cap A_i\subset A_i$ and $A_i$ is an atom, we obtain $\mu(B\cap A_i)=\mu(A_i)$, that is, $[B\cap A_i]=[A_i]$. Denoting by $J_B$ the set of all $i\in\{1,\ldots,k\}$ such that $\left[B\cap A_i\right]\neq[\emptyset]$, it follows that
\[
\mu\left(\textstyle\bigcup\limits_{i\in J_B} A_i \right) = \sum_{i\in J_B} \mu(A_i)=\sum_{i\in J_B} \mu(B \cap A_i)= \mu\left( \textstyle\bigcup\limits_{i\in J_B} (B\cap A_i )\right)=\mu (B).
\]
So every set $B\in\Sigma_{fin}$ satisfies
\begin{equation}\label{B=A_i}
[B]=\left[\bigcup\limits_{i\in J_B} A_i\right],
\end{equation}
where $J_B=\left\{i\in\{1,\ldots,k\}:\left[B\cap A_i\right]\neq[\emptyset] \right\}.$
This proves that $ent(\Omega)=2^{k}$.\\
\indent Now, we know that $L_p(X)$ is the closure of
\[
W=\left\{\sum\limits_{i=1}^n b_i\chi_{B_i} : n\in\mathbb{N},\ b_i\in\mathbb{K},\ B_i\in\Sigma_{fin}\right\}.
\]
By (\ref{B=A_i}), each $\sum\limits_{i=1}^n b_i\chi_{B_i}\in W$ is $\mu$-almost everywhere equal to an element of
\[
\left\{\sum\limits_{i=1}^ka_i\chi_{A_i} :  a_i\in\mathbb{K}\right\}.
\]
Thus
\[
L_p(X)=\overline{W}=\overline{\left\{\sum\limits_{i=1}^n b_i\chi_{B_i} : n\in\mathbb{N},\ b_i\in\mathbb{K},\ B_i\in\Sigma_{fin}\right\}}=\overline{\left\{\sum\limits_{i=1}^ka_i\chi_{A_i} : a_i\in\mathbb{K}\right\}}.
\]
Since $\dim\left\{\sum\limits_{i=1}^ka_i\chi_{A_i} :  a_i\in\mathbb{K}\right\}=k$, then $\left\{\sum\limits_{i=1}^ka_i\chi_{A_i} :  a_i\in\mathbb{K}\right\}$ is closed in $L_p(X)$. It follows that
$$\dim \left(L_p(\Omega)\right)=\dim\overline{W}=\dim\left\{\sum\limits_{i=1}^ka_i\chi_{A_i} : a_i\in\mathbb{K}\right\}=k.$$
\end{proof}

Let us state, for further reference, a fact proved in the proof above:
\begin{corollary}\label{cordimen} If $ent(\Omega) \geq \mathfrak{c}$, then $ \#L_p(\Omega) = ent(\Omega) = \dim(L_p(\Omega))$.
\end{corollary}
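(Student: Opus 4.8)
The plan is to extract from the proof of Theorem \ref{thmdim} exactly the case $ent(\Omega) \geq \mathfrak{c}$ and record the equalities that were established there. First I would recall that at the very beginning of the proof of Theorem \ref{thmdim} the authors introduced the set $W$ of simple functions built from representatives of classes in $\nicefrac{\Sigma_{fin}}{\sim}$, and showed via \cite[Proposition 6.7]{Folland} that $L_p(\Omega) = \overline{W}$. Under the hypothesis $ent(\Omega) \geq \mathfrak{c}$ they then computed $\#W = ent(\Omega)$ and deduced the chain
$$\#L_p(\Omega) = \#\overline{W} \leq \#W^{\mathbb{N}} = \left(\nicefrac{\Sigma_{fin}}{\sim}\right)^{\mathbb{N}} = ent(\Omega)^{\aleph_0} = ent(\Omega),$$
the last equality because $ent(\Omega) \geq \mathfrak{c} = 2^{\aleph_0}$ forces $ent(\Omega)^{\aleph_0} = ent(\Omega)$. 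Conversely, distinct classes $[A] \neq [B]$ give $\chi_A \neq \chi_B$ in $L_p(\Omega)$, so $ent(\Omega) \leq \#L_p(\Omega)$, and Cantor–Bernstein–Schr\"oder yields $\#L_p(\Omega) = ent(\Omega)$.

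Next I would obtain the equality with the dimension. This is precisely part (a) of Theorem \ref{thmdim} together with the borderline $ent(\Omega) = \mathfrak{c}$: since $\#L_p(\Omega) = ent(\Omega) \geq \mathfrak{c} = \#\mathbb{K}$, and for any vector space $V$ over a field $\mathbb{K}$ with $\#V \geq \#\mathbb{K}$ infinite one has $\dim(V) = \#V$ (because $\#V = \#\mathbb{K}^{(\dim V)} = \max(\#\mathbb{K}, \dim V)$, and if $\dim V$ were finite then $\#V$ would equal $\#\mathbb{K}$, forcing $\dim V$ infinite hence $\dim V = \#V$), we conclude $\dim(L_p(\Omega)) = \#L_p(\Omega) = ent(\Omega)$. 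When $ent(\Omega) > \mathfrak{c}$ this is exactly the displayed conclusion of (a); when $ent(\Omega) = \mathfrak{c}$ part (b) already gives $\dim(L_p(\Omega)) = \mathfrak{c} = ent(\Omega)$, so the triple equality $\#L_p(\Omega) = ent(\Omega) = \dim(L_p(\Omega))$ holds uniformly for $ent(\Omega) \geq \mathfrak{c}$.

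There is essentially no obstacle here: the corollary is a bookkeeping statement collecting facts already present in the proof of Theorem \ref{thmdim}, and indeed the authors flag it as such ("a fact proved in the proof above"). The only mild subtlety worth spelling out is the cardinal arithmetic identity $\kappa^{\aleph_0} = \kappa$ for $\kappa \geq \mathfrak{c}$ and the standard fact $\dim V = \#V$ when $\#V$ exceeds the (infinite) cardinality of the scalar field — both of which are invoked in the original argument without comment. So the write-up amounts to: quote the inequality $\#L_p(\Omega) \leq ent(\Omega)$ and the reverse inequality from the proof of Theorem \ref{thmdim}, invoke Cantor–Bernstein, then invoke the dimension-equals-cardinality principle, and finally note that both the $ent(\Omega) > \mathfrak{c}$ and $ent(\Omega) = \mathfrak{c}$ subcases of Theorem \ref{thmdim} deliver $\dim(L_p(\Omega)) = ent(\Omega)$.
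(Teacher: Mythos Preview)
Your approach matches the paper's exactly: the corollary simply records the equality $\#L_p(\Omega) = ent(\Omega)$ established at the start of the proof of Theorem~\ref{thmdim} (under the hypothesis $ent(\Omega) \geq \mathfrak{c}$) together with the dimension computations in parts (a) and (b). One caveat worth noting: the general principle you state --- that $\#V \geq \#\mathbb{K}$ infinite forces $\dim V = \#V$ --- is false as written (take $V = \mathbb{K}$ itself); the paper only invokes the strict-inequality version in part (a), and for the borderline case $ent(\Omega) = \mathfrak{c}$ one genuinely needs the explicit $\mathfrak{c}$-sized linearly independent family constructed in part (b), exactly as you do in your subsequent case-split, so your final argument is correct even though the intermediate general claim is not.
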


\begin{remark}\rm
Let us recall that the standard proof of the fact that the dimension of every infinite-dimensional Banach space is, at least, $\mathfrak{c}$ (via Baire's Theorem) depends on the Continuum Hypothesis (CH). As a byproduct, we shall now see that Theorem \ref{thmdim}, whose proof does not depend on the CH, can be used to give a CH-free proof of this fact: Let $E$ be an infinite-dimensional Banach space and let $(x_n)_{n=1}^\infty$ be a normalized basic sequence in $E$ (Mazur's classical proof of the existence of such a sequence does not depend on the CH; see \cite[Corollary 5.3]{Diestel}). The operator
$$(a_n)_{n=1}^\infty \in \ell_1 \mapsto  \sum_{n=1}^\infty a_nx_n \in E,$$
is well-defined because the series is absolutely convergent (and its linearity is obvious). The uniqueness of the representation of a vector in $E$ as a (eventually infinite) linear combination of the vectors of the basic sequence guarantees the injectivity of this linear operator. Then $\dim(E) \geq \dim\left(\ell_1\right)$. By Theorem \ref{thmdim} we know that $\dim \left(\ell_1\right) \geq \mathfrak{c}$ and, thus, $\dim (E) \geq \mathfrak{c}$.
\end{remark}

\section{$L_{p}(\Omega)- \textstyle\bigcup\limits_{q < p}L_{q}(\Omega)$ is ``{\em usually}'' maximal spaceable}

In this section we give quite general conditions under which $L_{p}(\Omega)-\bigcup\limits_{1 \leq q < p}L_{q}(\Omega)$ is maximal spaceable. It is worth mentioning once again that, unlike several results on lineability/spaceability of complements of subspaces or unions of subspaces (see, e.g., \cite{BDP_2009, BFPS_2012, BMP_2010, KT_2011}), we are not assuming that $L_{q}(\Omega) \subset L_{p}(\Omega)$.

Of course we need $L_{p}(\Omega)- \bigcup\limits_{q < p}L_{q}(\Omega) \neq \emptyset$, thus throughout this  section $(\Omega,\Sigma,\mu)$ is an infinite measure space.

The following elementary measure theoretic lemma shall be used in the proof of Lemma \ref{theo1}:

\begin{lemma}\label{elementar} Let $(B_n)_{n=1}^\infty$ be a sequence of measurable sets such that $\mu(B_n \cap B_m) = 0$ whenever $n \neq m$. Then
$$\mu\left(\textstyle\bigcup\limits_{n=1}^\infty B_n \right) = \sum_{n=1}^\infty \mu(B_n). $$
\end{lemma}

\begin{lemma}
\label{theo1} Let $\mathcal{X}$ be the set of all subsets $F$ of $\Sigma_{fin}$ satisfying the following conditions:
\begin{enumerate}
\item $\mu(A)>0$ for every $A \in F$.

\item If $A,B\in F$ are distinct, then $\mu(A\cap B)=0$.
\end{enumerate}
If the measure space $(\Omega, \Sigma, \mu)$ is $\zeta$-bounded for some cardinal number $\zeta$ with $\mathfrak{c} \le \zeta < ent(\Omega)$,  then there exists a set $G\in\mathcal{X}$ with $\# G=ent(\Omega)$.
\end{lemma}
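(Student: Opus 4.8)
The plan is to take $G$ to be a maximal element of $(\mathcal{X},\subseteq)$ and then to prove $\#G=ent(\Omega)$ by establishing the two inequalities $\#G\le ent(\Omega)$ and $\#G\ge ent(\Omega)$ and invoking the Cantor--Bernstein--Schr\"oder theorem. For the existence of a maximal $G$ I would apply Zorn's Lemma: $\mathcal{X}$ is nonempty (it contains $\emptyset$, and, since $ent(\Omega)\ge\aleph_{0}$, also a singleton $\{A\}$ with $\mu(A)>0$), and the union of a chain in $\mathcal{X}$ is again in $\mathcal{X}$ because conditions (1) and (2) constrain only one or two members of the family at a time and any two members of the union of a chain lie in a common element of the chain. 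So there is a maximal $G=\{A_{i}\}_{i\in I}\in\mathcal{X}$. The inequality $\#G\le ent(\Omega)$ is immediate: if $A,B\in G$ are distinct then $\mu(A\cap B)=0$ while $\mu(A),\mu(B)>0$, so $A\not\sim B$ (else $\mu(A)=\mu(A\cap B)=0$); hence $A\mapsto[A]$ embeds $G$ into $\nicefrac{\Sigma_{fin}}{\sim}$.

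The reverse inequality is the heart of the matter. From maximality I would extract two facts. First, every $B\in\Sigma_{fin}$ with $\mu(B)>0$ meets some $A\in G$ in positive measure, for otherwise $G\cup\{B\}$ would be a strictly larger element of $\mathcal{X}$. Second, a fixed $B\in\Sigma_{fin}$ meets only countably many members of $G$ in positive measure: the sets $B\cap A$, $A\in G$, are pairwise $\mu$-essentially disjoint subsets of $B$, so Lemma~\ref{elementar} forces $\{A\in G:\mu(B\cap A)>1/n\}$ to be finite for each $n$, and one takes the union over $n$. Listing the members of $G$ that $B$ meets in positive measure as $A_{i_{1}},A_{i_{2}},\dots$, applying the first fact to $B':=B\setminus\bigcup_{k}A_{i_{k}}$ gives $\mu(B')=0$, so $[B]=\bigl[\bigcup_{k}(B\cap A_{i_{k}})\bigr]$. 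Consequently the map sending $[B]$ to the pair formed by the countable set $\{A_{i_{k}}\}_{k}\subseteq G$ and the sequence $\bigl([B\cap A_{i_{k}}]\bigr)_{k}$ of $\sim$-classes of subsets of the respective $A_{i_{k}}$ is well defined and injective. Since the space is $\zeta$-bounded, the class $[B\cap A]$ has at most $\zeta$ possible values for each $A\in G$, and this yields
$$ent(\Omega)\ \le\ (\#G)^{\aleph_{0}}\cdot\zeta^{\aleph_{0}}.$$

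It then remains to deduce $\#G\ge ent(\Omega)$ from this bound, and this is the step I expect to be the main obstacle. A finite $G$ is excluded at once: then every $\sim$-class is a union of subsets of at most $\#G$ members of $G$, so $ent(\Omega)\le 2^{\#G}\cdot\zeta^{\#G}=\zeta<ent(\Omega)$, a contradiction; hence $G$ is infinite and the bound becomes $ent(\Omega)\le\max(\#G,\zeta)^{\aleph_{0}}$. From here the proof must be closed by cardinal arithmetic using the hypotheses $\mathfrak{c}\le\zeta<ent(\Omega)$ together with the identity $ent(\Omega)^{\aleph_{0}}=ent(\Omega)$ (valid since $ent(\Omega)\ge\mathfrak{c}$, as used in the proof of Theorem~\ref{thmdim}): in particular, replacing $\zeta$ by $\zeta^{\aleph_{0}}$ whenever this keeps it below $ent(\Omega)$ (a move preserving $\zeta$-boundedness and $\mathfrak{c}\le\zeta$), one may assume $\zeta^{\aleph_{0}}=\zeta$, and then comparing $\max(\#G,\zeta)^{\aleph_{0}}$ with $ent(\Omega)$ forces $\#G=ent(\Omega)$. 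Cantor--Bernstein--Schr\"oder then finishes the proof. The two delicate points are the measure-theoretic bookkeeping in the middle paragraph — above all, that each finite-measure set is covered modulo $\mu$-null sets by countably many members of $G$, so that the relevant unions remain measurable even though $I$ may be uncountable — and this concluding cardinal-arithmetic computation, where the exact roles of $\mathfrak{c}\le\zeta$ and $\zeta<ent(\Omega)$ have to be pinned down carefully.
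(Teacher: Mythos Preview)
Your overall architecture matches the paper's exactly: Zorn's Lemma to produce a maximal $G\in\mathcal X$, the easy inequality $\#G\le ent(\Omega)$ via $A\mapsto[A]$, the observation that by maximality every $B\in\Sigma_{fin}$ of positive measure meets some member of $G$ in positive measure, the countability of $H_B=\{A\in G:\mu(B\cap A)>0\}$ (the paper proves this with the same pigeonhole on $[\tfrac1{n+1},\tfrac1n]$ and Lemma~\ref{elementar}), and the conclusion $[B]=\bigl[\bigcup_{A\in H_B}(B\cap A)\bigr]$. All of this is correct and is precisely what the paper does.

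The divergence, and the genuine gap in your proposal, is the final counting step. The paper does \emph{not} pass through the estimate $ent(\Omega)\le(\#G)^{\aleph_0}\cdot\zeta^{\aleph_0}$; it asserts the product bound
\[
ent(\Omega)\ \le\ (\#G)\cdot\zeta
\]
directly from the two facts just established (each class is a countable union of subsets of members of $G$; there are at most $(\#G)\cdot\zeta$ equivalence classes of such subsets). From this bound the conclusion is immediate: $(\#G)\cdot\zeta=\max(\#G,\zeta)$, and $ent(\Omega)>\zeta$ forces $ent(\Omega)\le\#G$. No further cardinal arithmetic is needed.

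Your route through $(\#G)^{\aleph_0}\cdot\zeta^{\aleph_0}$ does not close. The identity $\kappa^{\aleph_0}=\kappa$ is \emph{not} a consequence of $\kappa\ge\mathfrak c$ in ZFC (e.g.\ $\kappa=\aleph_\omega$ under CH has $\kappa^{\aleph_0}>\kappa$ by K\"onig's theorem), so you cannot invoke it freely; the passage in Theorem~\ref{thmdim} you cite uses it only as a cardinality bound in a context where the space is already pinned down, and in any case does not establish it. Your ``replace $\zeta$ by $\zeta^{\aleph_0}$'' move fails exactly when $\zeta^{\aleph_0}\ge ent(\Omega)$, which nothing in the hypotheses rules out. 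And even after that reduction, from $ent(\Omega)\le(\#G)^{\aleph_0}$ you cannot deduce $\#G\ge ent(\Omega)$ without $(\#G)^{\aleph_0}=\#G$, which you have not argued. Concretely: with $\zeta=\mathfrak c$, $\#G=\aleph_\omega$, $ent(\Omega)=\aleph_{\omega+1}$ (all consistent with $\mathfrak c\le\zeta<ent(\Omega)$), the inequality $ent(\Omega)\le(\#G)^{\aleph_0}\cdot\zeta^{\aleph_0}$ is compatible with $\#G<ent(\Omega)$. So the missing idea is to obtain the bound $ent(\Omega)\le(\#G)\cdot\zeta$ without the $\aleph_0$-th power, as the paper does, rather than to recover from the weaker estimate by cardinal arithmetic.
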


\begin{proof}
Consider the partial order in $\mathcal{X}$ given by the natural inclusion,
that is, for $F_{1},F_{2}\in\mathcal{X}$,
\[
F_{1}\leq F_{2}\Longleftrightarrow F_{1}\subset F_{2}.
\]

Given a totally ordered subset $\mathcal{Y}$ of $\mathcal{X}$, define $F$ as the union of the elements of $\mathcal{Y}$. Since
$F\in\mathcal{X}$, $F$ is an upper bound for $\mathcal{Y}$. Thus, by Zorn's Lemma there is a maximal element $G\in\mathcal{X}$.
By assumption, each element of $G$ has at most $\zeta$ subsets with positive measure belonging to different
classes of $\nicefrac{\Sigma_{fin}}{\sim}$, then the number of subsets of elements of $G$ that represent different classes in $\nicefrac{\Sigma_{fin}}{\sim}$ is at most $(\#G)\cdot \zeta$.

Now fix $A\in\Sigma_{fin}$ with $\mu(A)>0$ and define
\[
H=\{B\in G:\mu(A\cap B)>0\}.
\]
Clearly $H\neq \emptyset$, because otherwise we would have $G\cup\{A\}>G$, which contradicts
the maximality of $G$. Let us prove that $\#H$ is at most $\aleph_{0}$. Suppose, by contradiction, that
$H$ is an uncountable set and note that, for each $B\in H$, the positive real
number $\mu(A\cap B)$ belongs to one of the sets $I_{n}:=\left[  \frac{1}{n+1},\frac{1}{n}\right]  $ or $J_{n}=\left[  n,n+1\right]  $ for some
$n\in\mathbb{N}$. There are countably many sets $I_{n},J_{n}$, with $n\in\mathbb{N}$, so it follows from the Infinite Pigeonhole
Principle that there is $n_{0}\in\mathbb{N}$ such that
\[
\mu(A\cap B)>\frac{1}{n_{0}}
\]
for uncountably many sets $B\in H$. In particular, there are distinct $(C_i)_{i \in \mathbb{N}}$ in $H$ such that
\[
\mu(A\cap C_{m})>\frac{1}{n_{0}}
\]
for every $m\in\mathbb{N}$. By Condition 2 we have that $\mu(C_{i}\cap C_{j})=0
$ whenever $i\neq j$. So Lemma \ref{elementar} gives
\[
\mu(A)\geq\mu\left(  A\textstyle\bigcap\left(  \bigcup\limits_{m=1}^{\infty
}C_{m}\right)  \right) =\sum_{m=1}^{\infty}\mu(A\cap
C_{m})=\infty,
\]
a contradiction that proves that $\#H \leq \aleph_{0}$.

Now, note that $\textstyle\bigcup\limits_{B \in H}(A \cap B) \in \Sigma_{fin}$ because $\textstyle\bigcup\limits_{B \in H}(A \cap B) \subset A$. Thus $$\mu\left(\textstyle\bigcup\limits_{B \in H}(A \cap B) \right) \leq \mu(A) < \infty.$$ Let us prove that $[A]=\left[  \bigcup\limits_{B\in H}(A\cap B)\right]  $. Assuming that
$[A]\neq\left[  \bigcup\limits_{B\in H}(A\cap B)\right]  $ in $\nicefrac{\Sigma_{fin}}{\sim}$, we have
\[
\mu\left(  A-\bigcup_{B\in H}(A\cap B)\right)  > 0.
\]
Let $C \in G$ be given and assume that
$$\mu \left(\left( A - \textstyle\bigcup\limits_{B \in H}(A \cap B)\right) \textstyle\bigcap C \right) > 0. $$
In this case,
$$\mu(A \cap C) \geq \mu \left(\left( A - \textstyle\bigcup\limits_{B \in H}(A \cap B)\right) \textstyle\bigcap C \right) > 0, $$
what implies that $C \in H$. Hence,
$$A - \textstyle\bigcup\limits_{B \in H} (A \cap B) \subset A - (A \cap C).
$$
It is clear that $(A - (A \cap C)) \cap C = (A - C) \cap C = \emptyset$, so by the inclusion above we obtain
$$\mu \left(\left( A - \textstyle\bigcup\limits_{B \in H}(A \cap B)\right) \textstyle\bigcap C \right) \leq \mu \left(( A - (A \cap C)) \textstyle\bigcap C \right) =0.$$
This contradiction proves that the intersection of $A-\bigcup\limits_{B\in H}(A\cap B)$ with each element of $G$
has null measure. Therefore $$G\cup\left\{A-\textstyle\bigcup\limits_{B\in H}(A\cap B)\right\}\in\mathcal{X}~{\rm and~}G\cup\left\{A-\bigcup\limits_{B\in H}(A\cap B)\right\}>G,$$
which contradicts the maximality of $G$. Therefore $[A]=\left[  \bigcup\limits_{B\in H}(A\cap
B)\right]  $. Since $A$ is an arbitrary set in $\Sigma_{fin}$ with positive measure, we have just proved that each class in $\nicefrac{\Sigma_{fin}}{\sim}$ can be represented by a union of countably many subsets of elements of $G$. Combining this fact with the fact that the number of subsets of elements of $G$ that represent different classes in $\nicefrac{\Sigma_{fin}}{\sim}$ is at most
$(\#G)\cdot \zeta$, we conclude that $ent(\Omega)\leq(\#G)\cdot \zeta$. By assumption we have $ent(\Omega)> \zeta \ge \mathfrak{c}$, so $ent(\Omega)\leq\#G$.

 On the other hand, we know that distinct elements of $G$ determine different classes in $\nicefrac{\Sigma_{fin}}{\sim}$. Thus $\#  G \leq ent(\Omega).$ Hence $ent(\Omega)=\#G$.
\end{proof}

\begin{lemma}\label{lemanovo}
Let $(B_i)_{i \in \mathbb{N}}$ be a sequence of pairwise disjoint measurable sets, in a measure space $(\Omega, \Sigma, \mu)$, with $0 < \mu(B_i) < \infty$ for every $i \in \mathbb{N}$. Then:\\
{\rm (a)} $\Sigma' := \left\{\bigcup\limits_{j\in J} B_j : J\subset\mathbb{N}\right\}$ is a $\sigma$-algebra of subsets of $\Omega':=\textstyle\bigcup\limits_{i=1}^{\infty} B_i$.\\
{\rm (b)} The restriction of $\mu$ to $\Sigma'$ is a measure. \\
{\rm (c)} For every $r \geq 1$,
\begin{equation} \label{destacada}L_r(\Omega')=\left\{\sum_{i=1}^{\infty}a_i\chi_{B_i}: \sum_{i=1}^{\infty}|a_i|^r\mu(B_i)<\infty\right\}.
\end{equation}
\end{lemma}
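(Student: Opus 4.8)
The plan is to prove each of the three assertions in turn, keeping the computations elementary. For part (a), I would note that $\Sigma'$ is a family of subsets of $\Omega'=\bigcup_{i=1}^\infty B_i$ since each $\bigcup_{j\in J}B_j\subset\Omega'$; that $\Omega'\in\Sigma'$ (take $J=\mathbb N$) and $\emptyset\in\Sigma'$ (take $J=\emptyset$); that a countable union $\bigcup_{k}\bigl(\bigcup_{j\in J_k}B_j\bigr)=\bigcup_{j\in\bigcup_k J_k}B_j$ again lies in $\Sigma'$; and that the complement of $\bigcup_{j\in J}B_j$ inside $\Omega'$ is $\bigcup_{j\in\mathbb N\setminus J}B_j$, using that the $B_i$ are pairwise disjoint. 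This last point is the only place where disjointness is used, and it is routine.

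For part (b), the restriction of $\mu$ to $\Sigma'$ clearly satisfies $\mu(\emptyset)=0$ and takes values in $[0,\infty]$; the only thing to check is countable additivity on $\Sigma'$, which is immediate since $\mu$ is already countably additive on the larger $\sigma$-algebra $\Sigma$ and $\Sigma'\subset\Sigma$. (Strictly speaking one should observe $\Sigma'\subset\Sigma$, which holds because each $B_i\in\Sigma$.)

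For part (c), I would first compute, for $f=\sum_{i=1}^\infty a_i\chi_{B_i}$ with the $B_i$ pairwise disjoint, that $|f|^r=\sum_{i=1}^\infty|a_i|^r\chi_{B_i}$ pointwise on $\Omega'$, so by the monotone convergence theorem $\int_{\Omega'}|f|^r\,d\mu=\sum_{i=1}^\infty|a_i|^r\mu(B_i)$; this gives the inclusion $\supseteq$ and shows every such $f$ lies in $L_r(\Omega')$ exactly when the displayed series converges. For the reverse inclusion, let $g\in L_r(\Omega')$. Since $g$ is $\Sigma'$-measurable and the atoms of $\Sigma'$ are precisely the sets $B_i$ (no nonempty proper measurable subset of a $B_i$ lies in $\Sigma'$), $g$ must be $\mu$-a.e.\ constant on each $B_i$; call that constant $a_i$. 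Then $g=\sum_{i=1}^\infty a_i\chi_{B_i}$ a.e., and the finiteness of $\|g\|_r^r=\sum_i|a_i|^r\mu(B_i)$ follows from the computation above.

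The main obstacle is the reverse inclusion in (c): one has to justify that a $\Sigma'$-measurable function is a.e.\ constant on each block $B_i$. The clean way is to observe that for any Borel set $U\subset\mathbb K$, the preimage $g^{-1}(U)\in\Sigma'$ is a union of some of the $B_i$, hence for each fixed $i$ either $B_i\subset g^{-1}(U)$ or $B_i\cap g^{-1}(U)=\emptyset$ (up to the choice of representing index set, hence up to null sets); applying this to a countable separating family of Borel sets forces $g$ to be constant on $B_i$. One should take a little care that "$g^{-1}(U)\in\Sigma'$" is interpreted modulo $\mu$-null sets, which is why the conclusion is "a.e.\ constant" rather than "constant"; with $\mu(B_i)>0$ this is exactly what is needed to pin down the scalar $a_i$ unambiguously.
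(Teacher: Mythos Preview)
Your argument is correct, and for part~(c) it is genuinely different from the paper's. The paper proves the inclusion $\subseteq$ by approximation: it writes $f\in L_r(\Omega')$ as an $L_r$-limit of simple functions $f_n=\sum_j a_n^j\chi_{B_j}$, shows that for each fixed $j$ the scalar sequence $(a_n^j)_n$ is Cauchy with limit $a_j$, deduces $f\chi_{B_j}=a_j\chi_{B_j}$, and then applies dominated convergence. Your route is more direct: you exploit immediately that each $B_i$ is an atom of $\Sigma'$, so any $\Sigma'$-measurable $g$ is constant on each $B_i$, and the series representation follows at once. This avoids the approximation step entirely and is cleaner.

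One small remark: your caution about ``a.e.\ constant'' is unnecessary here. Since every nonempty element of $\Sigma'$ contains some $B_i$ and $\mu(B_i)>0$, the only $\Sigma'$-null set is $\emptyset$; hence a $\Sigma'$-measurable function is literally constant on each $B_i$, not merely a.e.\ constant, and there is no need to pass to a separating family of Borel sets or to worry about representatives. The argument ``if $g(x)\neq g(y)$ for $x,y\in B_i$, separate them by a Borel set $U$ and note $g^{-1}(U)$ cannot be a union of whole $B_j$'s'' already gives pointwise constancy.
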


\begin{proof} (a) and (b) are straightforward. Let us prove (c). It is easy to see that any simple function having support of finite measure can be written as $\sum_{j=1}^\infty a_j \chi_{B_j}$, where only finitely many $a_j$'s are nonzero. So given $f \in L_r(\Omega')$ there is a sequence $(f_n)_{n=1}^\infty$ with $f_n = \sum_{j=1}^\infty a_n^j \chi_{B_j}$ such that only finitely many $a_n^j$'s are nonzero for every $n \in \mathbb{N}$ and $f = \displaystyle \lim_{n \rightarrow \infty} f_n$ in $L_r(\Omega')$. Fix $j \in \mathbb{N}$ for a moment. Since $\mu(B_j) < \infty$, we have $f\chi_{B_j} = \displaystyle \lim_{n \rightarrow \infty} f_n \chi_{B_j}$ in $L_r(\Omega')$. On the other hand, $f_n \chi_{B_j}(x) = a_n^j \chi_{B_j}(x)$ for every $x \in \Omega'$ and  every $n$; so $f\chi_{B_j} = \displaystyle \lim_{n \rightarrow \infty} a_n^j \chi_{B_j}$ in $L_r(\Omega')$. Hence $(a_n^j \chi_{B_j})_{n=1}^\infty$ is a Cauchy sequence in $L_r(\Omega')$, and since $0 < \mu(B_j) < \infty$ we have that $(a_n^j)_{n=1}^\infty$ is a Cauchy scalar sequence, say $a_j = \displaystyle \lim_{n \rightarrow \infty} a_n^j$. It follows easily that $a_j \chi_{B_j} = \displaystyle \lim_{n \rightarrow \infty} a_n^j \chi_{B_j}$ in $L_r(\Omega')$. The uniqueness of the limit in $L_r(\Omega')$ yields that $a_j \chi_{B_j} = f  \chi_{B_j}$ in $L_r(\Omega')$. Observing that $B_j$ contains no nonvoid measurable subset it follows that $f  \chi_{B_j}(x) = a_i  \chi_{B_j}(x)$ for every $x \in \Omega'$. In particular, $f(x) = a_j$ for every $x \in B_j$.
This holds for every $j \in \mathbb{N}$, so $f(x) = \sum_{j=1}^\infty a_j \chi_{B_j}(x)$ for every $  x\in \Omega'$. Since $\left|\sum_{j=1}^k a_j \chi_{B_j}(x) \right|\leq |f(x)|$ for every $x \in \Omega'$ and every $k \in \mathbb{N}$, by a standard application of the Dominated Convergence Theorem (see, e.g., \cite[Theorem 7.2]{Bartle}), we conclude that $f = \sum_{j=1}^\infty a_j \chi_{B_j}$ in $L_r(\Omega')$. Now it is immediate that $\sum_{j=1}^\infty |a_j|^r \mu(B_j) = \|f\|_r^r < \infty$.
\end{proof}

We shall need the following result due to Subramanian \cite{sub_1978} and Romero \cite{romero_1983} (see also \cite[Theorem 3.1]{BO_2012}):
\begin{theorem}\label{newtheo} Let $(\Omega, \Sigma, \mu)$ be a measure space and $p > q \geq 1$. Then\\
{\rm (a)} $L_p(\Omega)\supset L_q(\Omega)$ if and only if $\inf\{\mu(A) : A\in\Sigma_{fin},\mu(A)>0\}>0$.\\
{\rm (b)} $L_q(\Omega)\supset L_p(\Omega)$ if and only if $\sup\{\mu(A): A\in\Sigma_{fin}\}<\infty$.
\end{theorem}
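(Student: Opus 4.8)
The plan is to prove each of the two equivalences by deriving the ``easy'' inclusion directly from the stated condition and the converse by contraposition, in the latter case exhibiting an explicit function that witnesses the failure of the inclusion. Throughout, the exponent gap $p>q$ is what makes the two estimates go through.

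For (a), I would first assume $\delta:=\inf\{\mu(A):A\in\Sigma_{fin},\ \mu(A)>0\}>0$ and take $f\in L_q(\Omega)$. For each $n\in\mathbb{N}$ the set $E_n:=\{x:|f(x)|>n\}$ satisfies $\mu(E_n)\le n^{-q}\|f\|_q^q<\infty$, so $E_n\in\Sigma_{fin}$; choosing $n$ with $n^{-q}\|f\|_q^q<\delta$ forces $\mu(E_n)=0$, since any member of $\Sigma_{fin}$ of measure $<\delta$ is null. Hence $f\in L_\infty(\Omega)$, and since $p>q$ we get $\int_\Omega|f|^p\,d\mu\le\|f\|_\infty^{p-q}\|f\|_q^q<\infty$, i.e.\ $f\in L_p(\Omega)$; thus $L_q(\Omega)\subset L_p(\Omega)$. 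For the converse, suppose $\delta=0$. The crucial step is to produce a sequence $(A_n)_{n=1}^\infty$ of pairwise disjoint sets in $\Sigma_{fin}$ with $0<\mu(A_n)\to 0$; after passing to a subsequence one may assume in addition that $\sum_n\mu(A_n)^{1-q/p}<\infty$ (possible because $1-q/p>0$ and $\mu(A_n)\to 0$). Then $f:=\sum_n\mu(A_n)^{-1/p}\chi_{A_n}$ has $\int_\Omega|f|^q\,d\mu=\sum_n\mu(A_n)^{1-q/p}<\infty$ but $\int_\Omega|f|^p\,d\mu=\sum_n 1=\infty$ (the series computations using the disjointness of the $A_n$ and the monotone convergence theorem), so $f\in L_q(\Omega)\setminus L_p(\Omega)$ and $L_q(\Omega)\not\subset L_p(\Omega)$.

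For (b), I would first assume $M:=\sup\{\mu(A):A\in\Sigma_{fin}\}<\infty$ and take $f\in L_p(\Omega)$. The sets $A_n:=\{x:\tfrac{1}{n+1}<|f(x)|\le\tfrac{1}{n}\}$, $n\ge 1$, lie in $\Sigma_{fin}$, are pairwise disjoint, and each of their finite unions lies in $\Sigma_{fin}$, hence has measure $\le M$; letting the number of sets grow yields $\mu(\{0<|f|\le 1\})=\sum_n\mu(A_n)\le M$. Consequently $\int_\Omega|f|^q\,d\mu\le\int_{\{|f|>1\}}|f|^p\,d\mu+\mu(\{0<|f|\le 1\})\le\|f\|_p^p+M<\infty$, so $L_p(\Omega)\subset L_q(\Omega)$. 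For the converse, suppose $M=\infty$. One builds pairwise disjoint sets $A_n\in\Sigma_{fin}$ with $\mu(A_n)>2^n$ inductively: given $A_1,\dots,A_{n-1}$, choose $B\in\Sigma_{fin}$ with $\mu(B)>2^n+\mu(A_1\cup\cdots\cup A_{n-1})$ and put $A_n:=B\setminus(A_1\cup\cdots\cup A_{n-1})$. Then $f:=\sum_n\mu(A_n)^{-1/q}\chi_{A_n}$ satisfies $\int_\Omega|f|^q\,d\mu=\sum_n 1=\infty$ while $\int_\Omega|f|^p\,d\mu=\sum_n\mu(A_n)^{1-p/q}\le\sum_n 2^{n(1-p/q)}<\infty$ because $1-p/q<0$; hence $f\in L_p(\Omega)\setminus L_q(\Omega)$.

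The one genuinely delicate point is the construction, in the converse of (a), of pairwise disjoint positive-measure sets whose measures tend to $0$. Here I would argue exactly as in the proofs of Theorem \ref{thmdim}(c) and Lemma \ref{theo1}: if some $A\in\Sigma_{fin}$ of positive (finite) measure has positive-measure subsets of arbitrarily small measure, then a maximal pairwise disjoint family of positive-measure subsets of $A$ must be infinite --- a finite one would make $A$ a finite union of atoms and force a positive lower bound on the measures of its positive-measure subsets, contradicting $\delta=0$ inside $A$ --- and the members of such an infinite family have measures $\to 0$ since $\mu(A)<\infty$; otherwise every set in $\Sigma_{fin}$ of positive measure has its positive-measure subsets bounded away from $0$, so each is a finite union of atoms, and taking sets of measure $\to 0$ and extracting an atom from each produces atoms of measure $\to 0$, which are pairwise disjoint whenever they are distinct. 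Either way the required sequence is obtained. By contrast, the analogous step in the converse of (b) (measures $\to\infty$) is entirely elementary, as shown above.
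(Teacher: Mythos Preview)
The paper does not prove this theorem; it is quoted as a known result of Subramanian and Romero (with a further reference to \cite{BO_2012}), so there is no in-paper argument to compare against. Your overall strategy --- deduce each inclusion directly from the numerical hypothesis and, for the converses, build an explicit witness of the form $\sum_n c_n\chi_{A_n}$ on a suitable disjoint sequence --- is the standard one and is correct in parts (a) forward, (b) forward, (b) converse, and in Case~2 of (a) converse.

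There is, however, a genuine gap in Case~1 of the converse of (a). You claim that if $A\in\Sigma_{fin}$ has positive-measure subsets of arbitrarily small measure, then any maximal pairwise disjoint family of positive-measure subsets of $A$ must be infinite, since a finite one would force $A$ to be a finite union of atoms. This inference is false: the singleton $\{A\}$ is always such a maximal family (nothing disjoint from $A$ can be added inside $A$), yet $A$ need not be an atom. Maximality with respect to inclusion of families does not force the members to be atoms. The repair is straightforward: since $A$ has arbitrarily small positive-measure subsets, $A$ is not (up to a null set) a finite union of atoms. Hence either $A$ has a non-atomic subset $B$ of positive finite measure --- in which case repeated splitting gives $B=B_0\supset B_1\supset\cdots$ with $0<\mu(B_{n+1})\le\tfrac12\mu(B_n)$, and the disjoint sets $B_{n}\setminus B_{n+1}$ have positive measure tending to $0$ --- or else every positive-measure subset of $A$ contains an atom, in which case a maximal disjoint family of \emph{atoms} in $A$ exists (Zorn), covers $A$ up to a null set, and must be infinite (finite would give the forbidden lower bound); since $\mu(A)<\infty$, the atoms' measures tend to $0$. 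Either way you obtain the disjoint sequence you need, and the rest of your argument goes through unchanged.
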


\noindent As to the maximal spaceability of $L_{p}(\Omega) - \textstyle\bigcup\limits_{q<p}L_{q}(\Omega)$, there is nothing to do if $ent(\Omega) \in\mathbb{N}$, because in this case we have, by Theorem \ref{thmdim}(c), that $L_{p}(\Omega)$ is finite-dimensional. So we restrict ourselves, without loss of generality, to the case $ent(\Omega) \geq\aleph_{0}$.

\begin{theorem} Let $p > 1$.\label{theo2} The set $L_{p}(\Omega)-\bigcup\limits_{1 \leq q<p}L_{q}(\Omega)$ is maximal spaceable if:
\begin{itemize}
\item[\rm (a)] Either  $L_{p}(\Omega)-L_{r}(\Omega)\neq \emptyset$ for some $1\leq r<p$
and $\aleph_0 \leq ent(\Omega) \leq\mathfrak{c}$;
\item[\rm (b)] or the measure space $(\Omega, \Sigma, \mu)$ is $\zeta$-bounded for some cardinal number $\zeta$ with $\mathfrak{c} \le \zeta < ent(\Omega)$.
\end{itemize}
\end{theorem}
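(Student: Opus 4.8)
The plan is to construct, in both cases, a sequence of pairwise (a.e.) disjoint measurable sets $(B_i)_{i\in\mathbb N}$ of finite positive measure on which the ``tail behaviour'' of $L_p$ genuinely differs from that of all $L_q$ with $q<p$, and then to build the closed subspace $F$ inside the sub-$L_p$-space supported on $\Omega':=\bigcup_i B_i$. The workhorse is Lemma \ref{lemanovo}: on such a $\Omega'$ the space $L_r(\Omega')$ is isometrically a weighted sequence space $\{(a_i): \sum|a_i|^r\mu(B_i)<\infty\}$. So the problem reduces to producing, in this weighted $\ell_p$-type space, a closed subspace of the right dimension all of whose nonzero elements fail to lie in any of the weighted $\ell_q$-spaces for $q<p$. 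The standard trick (going back to \cite{MPPS_2008,BDFP_2011,BFPS_2012}) is to fix a single vector $f$ that is in $L_p(\Omega')$ but in no $L_q(\Omega')$, $q<p$, and then take $F$ to be the closed span of $\{f\chi_S : S\in\mathcal F\}$ for a suitable family $\mathcal F$ of subsets of the index set; the point is that every nonzero element of this span dominates, up to a nonzero scalar on a tail, the function $f$ itself on an infinite piece, hence inherits non-membership in the $L_q$'s.

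First I would handle the existence of the exceptional vector $f$. In case (a), the hypothesis $L_p(\Omega)-L_r(\Omega)\neq\emptyset$ for some $r<p$, combined with Theorem \ref{newtheo}(a), forces $\inf\{\mu(A):A\in\Sigma_{fin},\ \mu(A)>0\}=0$, so one can extract disjoint $(B_i)$ with $\mu(B_i)\downarrow 0$ fast enough that a suitable $f=\sum_i c_i\chi_{B_i}$ lies in $L_p(\Omega')$ but in no $L_q(\Omega')$, $q<p$ — here one must be slightly careful to kill \emph{all} $q<p$ simultaneously, which is done by choosing $c_i$ so that $\sum|c_i|^p\mu(B_i)<\infty$ but $\sum|c_i|^q\mu(B_i)=\infty$ for every $q<p$ (a diagonal/interpolation choice of weights; membership in $L_q(\Omega')$ for a \emph{fixed} $q$ is equivalent to finiteness of that series by Lemma \ref{lemanovo}(c), and one also checks that $g\in L_q(\Omega)$ would restrict to $L_q(\Omega')$). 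In case (b), Lemma \ref{theo1} supplies a family $G\in\mathcal X$ with $\#G=ent(\Omega)$; picking countably many members of $G$ gives the $(B_i)$, and whether their measures can be forced to tend to $0$ again comes down to Theorem \ref{newtheo}(a): if the infimum of positive finite measures were positive then $L_p\supset L_q$ and one argues instead with the fact that $ent(\Omega)>\mathfrak c$ makes $L_p(\Omega)$ huge — but in fact one should just check directly that $L_p(\Omega)-\bigcup_{q<p}L_q(\Omega)\neq\emptyset$ holds automatically here, because otherwise $L_p$ would be contained in some $L_q$ (or vice versa) which is incompatible with $ent(\Omega)>\mathfrak c$.

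Next comes the dimension count. In case (a), Theorem \ref{thmdim}(b) gives $\dim L_p(\Omega)=\mathfrak c$, so I would take $\mathcal F$ to be a totally ordered (by inclusion) family of subsets of $\mathbb N$ with $\#\mathcal F=\mathfrak c$ — exactly the Dedekind-cut family $\{(-\infty,r)\cap\mathbb Q : r\in\mathbb R\}$ used in the proof of Theorem \ref{thmdim}(b) — and argue that $\{f\chi_S:S\in\mathcal F\}$ spans a subspace of dimension $\mathfrak c$; its closure $F$ then also has dimension $\mathfrak c=\dim L_p(\Omega)$, since a closed infinite-dimensional subspace of a Banach space has dimension at least $\mathfrak c$ and at most $\dim L_p(\Omega)=\mathfrak c$. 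In case (b), $\dim L_p(\Omega)=ent(\Omega)>\mathfrak c$ by Theorem \ref{thmdim}(a) and Corollary \ref{cordimen}; now I need a family $\mathcal F$ of subsets of (an index set of size) $ent(\Omega)$ that is linearly independent after applying $f\chi_{(\cdot)}$ and of cardinality $ent(\Omega)$ — but here the $B_i$ are indexed by $\mathbb N$, so instead one should index disjoint sets by $G$ itself (size $ent(\Omega)$) and let $f$ be an appropriate combination over $G$; one then takes $\mathcal F$ to be, say, all singletons together with enough subsets, and checks $\#F=\#L_p(\Omega)=ent(\Omega)=\dim L_p(\Omega)$ via Corollary \ref{cordimen} and Cantor–Bernstein.

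The main obstacle — the step I expect to be most delicate — is the \emph{spaceability core}: showing that every nonzero $g$ in the closed span $F=\overline{\operatorname{span}}\{f\chi_S:S\in\mathcal F\}$ fails to belong to $\bigcup_{q<p}L_q(\Omega)$. Linear combinations $\sum_{k=1}^m t_k f\chi_{S_k}$ with $S_1\subsetneq\cdots\subsetneq S_m$ equal, on the ``outermost annulus'' $S_m\setminus S_{m-1}$, the function $t_m f$, which is in $L_p$ but in no $L_q$; so finite combinations are fine, but one must pass to the closure. The subtlety is that a norm-limit of such combinations could a priori conspire to cancel the bad tail. The resolution: the coordinate functionals on the weighted sequence space are continuous, so a limit $g$ still has the form $\sum a_i\chi_{B_i}$ with coefficients determined as limits; one shows the set of indices where $|a_i|\ge\varepsilon|c_i|$ for suitable $\varepsilon>0$ is still "co-small" enough inside one of the annuli that $\sum|a_i|^q\mu(B_i)=\infty$ for every $q<p$. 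Making this quantitative — choosing $\mathcal F$ and the weights $c_i$ so that the domination survives the closure uniformly — is the crux, and it is precisely where the careful choice of $\mathcal F$ (nested, order-isomorphic to a dense subset of $\mathbb R$) pays off, because nestedness guarantees that any nonzero finite combination is "eventually constant $=t_mf$" on a genuine infinite tail, a property stable under small perturbations.
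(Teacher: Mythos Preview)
Your proposal has two genuine gaps.

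\medskip

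\textbf{Part (a): wrong direction in Theorem \ref{newtheo}.} The hypothesis $L_p(\Omega)-L_r(\Omega)\neq\emptyset$ says $L_p\not\subset L_r$, so it is part (b) of Theorem \ref{newtheo} that applies, yielding $\sup\{\mu(A):A\in\Sigma_{fin}\}=\infty$, not $\inf=0$. The two are independent: for the counting measure on $\mathbb N$ one has $\inf=1>0$ yet $\ell_p-\ell_r\neq\emptyset$ for every $p>r\ge 1$. So your extraction of sets with $\mu(B_i)\downarrow 0$ is not available in general. The paper does the opposite: it chooses the $B_i$ with measures bounded \emph{below} by $\mu(B_1)>0$, so that on the auxiliary space $(\Omega',\Sigma')$ one has $\inf>0$, hence $L_q(\Omega')\hookrightarrow L_p(\Omega')$ continuously for each $q<p$; writing $\bigcup_{q<p}L_q(\Omega')$ as a countable increasing union and checking it is dense but proper, spaceability then drops out of the Kitson--Timoney operator-range theorem \cite{KT_2011}, after which one transfers back to $\Omega$.

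\medskip

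\textbf{The ``spaceability core'' via $\{f\chi_S:S\in\mathcal F\}$ does not work.} With $\mathcal F$ the Dedekind-cut family, the span already contains every difference
\[
f\chi_{S_{r'}}-f\chi_{S_r}=f\chi_{S_{r'}\setminus S_r},\qquad r<r'.
\]
There is no reason for $f\chi_{S_{r'}\setminus S_r}$ to lie outside $L_q$: the annulus $S_{r'}\setminus S_r$ is just \emph{some} infinite subset of the index set, determined by the bijection $\mathbb N\leftrightarrow\mathbb Q$, and one can easily arrange bijections for which $\sum_{i\in S_{r'}\setminus S_r}|c_i|^q\mu(B_i)<\infty$. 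Your claim that a nonzero finite combination is ``eventually $=t_m f$ on a genuine infinite tail'' is also off: outside $S_{r_m}$ the combination is identically~$0$, and on the outermost annulus $S_{r_m}\setminus S_{r_{m-1}}$ it equals $t_m f$, but that annulus need not carry infinite $L_q$-mass. The family $\{f\chi_S:S\in\mathcal F\}$ is exactly what the paper uses in Theorem \ref{thmdim}(b) for the \emph{dimension count} $\dim L_p(\Omega)\ge\mathfrak c$; it is not the tool for spaceability.

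For part (b) the paper's construction is structurally different from yours and sidesteps the closure issue entirely. From Lemma \ref{theo1} and pigeonhole one extracts $G'\subset G$ with $\#G'=ent(\Omega)$ and all members of the \emph{same} measure, then partitions $G'$ into $ent(\Omega)$ many countable blocks $\{A_k^j:j\in\mathbb N\}_{k\in I}$ and sets $f_k=\sum_j b_j\chi_{A_k^j}$ for a fixed $(b_j)\in\ell_p-\bigcup_{q<p}\ell_q$. The $f_k$'s have pairwise null-intersecting supports and identical $L_p$-norms, so $\mathrm{span}\{f_k\}$ is isometric to the finitely supported vectors in $\ell_p(I)$; any Cauchy sequence converges to some $h=\sum_l a_l g_l$ with $(a_l)\in\ell_p$, and if $a_{l_0}\neq 0$ then $\|h\|_q^q\ge |a_{l_0}|^q\|g_{l_0}\|_q^q=\infty$ by disjointness. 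No perturbation argument is needed.
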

\begin{proof} (a) Since $\aleph_0 \leq ent(\Omega) \leq\mathfrak{c}$, by Theorem \ref{thmdim}(b) we know that $\dim\left(L_p(\Omega)\right) = \mathfrak{c}$.
Therefore we only need to prove that $L_{p}(\Omega)- \bigcup\limits_{1 \leq q<p}L_{q}(\Omega)$ is spaceable.\\

\noindent Since $L_{p}(\Omega)-L_{r}(\Omega) \neq \emptyset$ for some $1\leq r<p$, by Theorem \ref{newtheo}(b) we have that
\begin{equation} \label{eqsup}\sup\{\mu(A): A\in\Sigma_{fin}\}=\infty.\end{equation} In this case we can choose pairwise disjoint measurable sets $(B_i)_{i \in \mathbb{N}}$ such that $0<\mu(B_1)<\mu(B_{i})$ for every $i \in \mathbb{N}$. Indeed, choose $B_1 \in \Sigma_{fin}$ with $\mu(B_1) >0$ and proceed inductively in the following way: if $B_1, \ldots, B_k$ have been chosen in those conditions, by (\ref{eqsup}) there is $A_{k+1} \in \Sigma_{fin}$ such that $\mu(A_{k+1}) > 2 \mu(B_1 \cup \cdots \cup B_k)$. Choose $B_{k+1} = A_{k+1} - (B_1 \cup \cdots \cup B_k)$.\\
  \indent Consider now the measure space $(\Omega',\Sigma', \mu)$, where $\Omega'$ and $\Sigma'$ are defined as in Lemma \ref{lemanovo}.
Let us prove that $L_p(\Omega')-\bigcup\limits_{1 \leq q<p}L_q(\Omega')$ is spaceable in $L_p(\Omega')$. First note that $$\inf\left\{\mu(A): A\in\Sigma'_{fin} {\rm ~ and~}\mu(A)>0\right\}=\mu(B_1)>0.$$
From Theorem \ref{newtheo}(a) it follows that $L_p(\Omega')\supset L_q(\Omega')$ for every  $1 \leq q <p$. Applying (\ref{destacada}) for $r = q$ we know that every function in $L_q(\Omega')$ can be written as $\sum_{i=1}^{\infty}a_i\chi_{B_i}$ with $\sum_{i=1}^{\infty}|a_i|^q\mu(B_i)<\infty$.
Note that if $\left\|\sum_{i=1}^{\infty}a_i\chi_{B_i}\right\|_q<\mu(B_1)^{\frac{1}{q}}$, then  $|a_i|<1$ for every $i\in \mathbb{N}$. Since $p>q\geq 1$ and $|a_i|<1$ for every $i\in \mathbb{N}$, we have that $$\left\|\sum_{i=1}^{\infty}a_i\chi_{B_i}\right\|_q>\left\|\sum_{i=1}^{\infty}a_i\chi_{B_i}\right\|_p.$$
Given $\varepsilon > 0$, choose $\delta=\min\{\varepsilon,\mu(B_1)^{\frac{1}{q}}\}>0$. If $\left\|\sum_{i=1}^{\infty}a_i\chi_{B_i}\right\|_q<\delta$, then $$\varepsilon>\left\|\sum_{i=1}^{\infty}a_i\chi_{B_i}\right\|_q>\left\|\sum_{i=1}^{\infty}a_i\chi_{B_i}\right\|_p.$$
This shows that the inclusion $L_q(\Omega')\hookrightarrow L_p(\Omega')$ is continuous for every  $1 \leq q <p$. Choosing a sequence $(a_j)_{j=1}^\infty \in \ell_p - \bigcup\limits_{q<p} \ell_q$, it is clear that the  function $$f = \sum\limits_{j=1}^\infty \frac{a_j}{\mu(B_j)^{1/p}}\chi_{B_j} $$
 belongs to $L_p(\Omega')$. Using that $\mu(B_j) > \mu(B_1)$ for every $j$, it follows that $f \notin L_q(\Omega')$ for every $1 \leq q < p$. So $L_p(\Omega')\neq \bigcup\limits_{1 \leq q<p}L_q(\Omega')$. And since $$W:=\left\{\sum_{i=1}^na_i\chi_{A_i}: n \in \mathbb{N}, a_i\in\mathbb{K}{\rm ~and~} A_i\in\Sigma'_{fin} \right\}\subset \textstyle\bigcup\limits_{1 \leq q<p}L_q(\Omega') \subset L_p(\Omega')$$
and $W$ is dense in $L_p(\Omega')$, it follows that $\bigcup\limits_{1 \leq q<p}L_q(\Omega')$ is dense in $L_p(\Omega')$ as well. So $\bigcup\limits_{1 \leq q<p}L_q(\Omega')$ is not closed in $L_p(\Omega')$ because $L_p(\Omega')\neq \bigcup\limits_{1 \leq q<p}L_q(\Omega')$. Choose a sequence $(q_j)_{j=1}^\infty$ such that $1 \leq q_j < q_{j+1}$ for every $j$ and $q_j \longrightarrow p$. Theorem \ref{newtheo}(a) assures that $L_{q_j} \subset L_{q_{j+1}}$ for every $j$, hence
  $$\textstyle \bigcup\limits_{1 \leq q < p}L_q(\Omega') = \bigcup\limits_{j=1}^\infty L_{q_j}(\Omega').   $$The spaceability of $L_p(\Omega')-\bigcup\limits_{1 \leq q < p} L_q(\Omega')$ in $L_p(\Omega')$ follows now from \cite[Theorem 3.3]{KT_2011}.\\
\indent A function $f$ defined on $\Omega'$ shall be identified with a function defined on $\Omega$ by putting $f(x) = 0$ for every $x \in (\Omega - \Omega')$. Since $\|f\|_{L_p(\Omega')} = \|f\|_{L_p(\Omega)}$ for every $f \in L_p(\Omega')$, it is plain that $L_p(\Omega')$ is a closed subspace of $L_p(\Omega)$ up to this identification.\\
 \indent Use that $L_p(\Omega')\supset \textstyle\bigcup\limits_{1 \leq q<p} L_q(\Omega')$ and apply (\ref{destacada}) for $r = p$ and for $r = q <p$ to conclude that
 $$L_p(\Omega')\textstyle\bigcap \left(\textstyle\bigcup\limits_{1 \leq q<p} L_q(\Omega)\right)= \textstyle\bigcup\limits_{1 \leq q<p}L_q(\Omega').$$
Thus
\begin{align*}L_p(\Omega')- \textstyle\bigcup\limits_{1 \leq q<p}L_q(\Omega')&=L_p(\Omega')-\left(L_p(\Omega')\bigcap \left(\textstyle\bigcup\limits_{1 \leq q<p} L_q(\Omega)\right)\right)\\&=L_p(\Omega')-\textstyle\bigcup\limits_{1 \leq q<p} L_q(\Omega).
\end{align*}
 It follows that $L_p(\Omega')- \textstyle\bigcup\limits_{1 \leq q<p}L_q(\Omega)$ is spaceable in the closed subspace $L_p(\Omega')$ of $L_p(\Omega)$, hence $L_p(\Omega')- \textstyle\bigcup\limits_{1 \leq q<p} L_q(\Omega)$ is spaceable in $L_p(\Omega)$. Therefore $L_p(\Omega)- \textstyle\bigcup\limits_{1 \leq q<p} L_q(\Omega)$ is spaceable in $L_p(\Omega)$.

\medskip

\noindent (b) Let $G$ be the family whose existence is guaranteed by Lema \ref{theo1}. Since $\#G=ent(\Omega)>\zeta$ and there are only $\mathfrak{c}$  possible values for the measures of the sets in $G$ (of course
$\mu(B)\in(0,\infty)$ for every $B \in G$), there is a subfamily $G^{\prime}\subset G$, with the same
cardinality of $G$, such that all members of $G'$ have the same measure (this is another application of the Infinite Pigeonhole Principle). Denote $G^{\prime}=\{A_{k}:k\in I\}$ with $\#I=ent(\Omega)$.
Recall that $A_{k}\neq A_{s}$ implies $\mu(A_{k}\cap A_{s})=0$ but
$\mu(A_{k})=\mu(A_{s})$. Since the cardinality of $I$ is greater than $\zeta$ and $\aleph
_{0}\cdot \zeta=\zeta$, for every $i\in I$ and every $n\in\mathbb{N}$ there
is a set $A_{i}^{n}$ so that:\\
\indent(i) $A_{i}^{j}\neq A_{i}^{k}$ whenever $i \in I$ and $j\neq k$ are
positive integers;\\
\indent (ii) The sets $J_{i}:=\{A_{i}^{j}:j\in\mathbb{N}\}$, $i \in I$, are pairwise disjoint;\\
\indent(iii) $
G^{\prime}={\textstyle\bigcup\limits_{i\in I}} J_{i}$.\\

Select a sequence $(b_{j})_{j=1}^{\infty}\in \ell_{p}-\textstyle\bigcup\limits_{q<p}\ell_{q}$ with $b_j >0$ for every $j$. For each $k\in I$, define
\[
f_{k}=\sum_{j=1}^{\infty}b_{j}\chi_{A_{k}^{j}},
\]
Observe that
\begin{enumerate}
\item The intersection of the supports of $f_{k}$ and $f_{s}$, $k \neq s$, has measure zero.
Therefore $\#\{f_{k} : k\in I\}=\# I$ and the functions $f_{k}$'s are linearly independent.
\item Let $k \in I$ and $i \in \mathbb{N}$. Since $\mu(A_{k}^{j}\cap A_{k}^{s})=0$ for all all positive integers $j \neq s$ and $\mu(A_k^j) = \mu(A_k^i)$ for every $j \in \mathbb{N}$, for every $t >0$ we have
\begin{equation}\label{eqnew}\int|f_{k}|^{t}\,d\mu=\sum_{j=1}^{\infty}|b_{j}|^{t}
\mu(A_{k}^{j})=\left(  \sum_{j=1}^{\infty}|b_{j}|^{t}\right)
 \mu(A_{k}^{i}).\end{equation}   Therefore each
$f_{k}\in L_{p}(\Omega)-\textstyle\bigcup\limits_{q<p}L_{q}(\Omega)$.
\item For all $k,l \in I$, \begin{equation}\label{equnew}\int|f_{k}|^{p}\,d\mu=\int|f_{l}|^{p}\, d\mu.\end{equation}
\end{enumerate}

Let $W={\rm span}\{f_{k} : k\in I\}\subset L_{p}(\Omega)$. Let $(h_{n})_{n=1}^\infty$ be a Cauchy sequence
in $W$ (with respect to the $L_{p}(\Omega)$-norm). Each $h_{n}$ is a finite linear combination of some
$f_{k}$'s, so these functions altogether demand only countably many $f_{k}$'s in their representations as linear combinations. Let $(g_l)_{l=1}^\infty$ be an enumeration of these $f_{k}$'s. Thus
\[
h_{n}=\sum_{l=1}^{\infty} a_{l}^{n}g_{l},
\]
where, for each $n$, only finitely many $a_{l}^{n}$'s are nonzero. Using that the intersection of the supports of $g_{k}$ and $g_{s}$, $k \neq s$, has measure zero and (\ref{equnew}) we obtain, for any fixed $j \in \mathbb{N}$,
\[
\int|h_{n}-h_{s}|^{p}\, d\mu=\sum_{l=1}^{\infty}|a_{l}^{n}-a_{l}^{s}|^{p}\cdot \int|g_{l}|^{p} \, d\mu=\left(\sum_{l=1}^{\infty}|a_{l}^{n}-a_{l}^{s}|^{p} \right)\int|g_{j}|^{p}\, d\mu.
\]
It follows that $\left((a_{l}^{n})_{l=1}^\infty\right)_{n=1}^\infty$ is a Cauchy sequence in $\ell_{p}$, say $\displaystyle \lim
_{n\rightarrow\infty}\left(a_{l}^{n}\right)_{l=1}^\infty= (a_{l})_{l=1}^\infty\in \ell_{p}$. Define $h=\sum_{l=1}^{\infty}a_{l}g_{l}$ and notice that $h\in L_{p}(\Omega)$. Now
\[
\int|h_{n}-h|^{p}\, d\mu=\sum_{l=1}^{\infty}|a_{l}^{n}-a_{l}|^{p} \cdot\int|g_{l}|^{p} \, d\mu.
\]
Since $\displaystyle\int|g_{l}|^{p}\, d\mu$ does not depend on $l$, by (\ref{eqnew})  and
$\displaystyle \lim
_{n\rightarrow\infty}\left(a_{l}^{n}\right)_{l=1}^\infty= (a_{l})_{l=1}^\infty$ in $\ell_{p}$, we obtain $\displaystyle \lim_{n\rightarrow\infty}h_{n}= h$ in
$L_{p}(\Omega)$. Finally, if $h\neq0$ then some $a_{l}\neq0$, hence $h\notin \textstyle\bigcup\limits_{q<p} L_{q}(\Omega)$.
\end{proof}

\begin{remark}\rm
Observe that in case (b) of the theorem above we have actually proved that $L_{p}(\Omega)-\bigcup\limits_{0 < q<p}L_{q}(\Omega)$ is maximal spaceable for every $p > 0$. Notice that, as a particular case, from Theorem \ref{newtheo} (b) one has that condition \eqref{eqsup} is fulfilled and, thus, we also obtain (independently) a result already given in \cite{BO_2012} on the spaceability of this set.
\end{remark}

All usual infinite measure spaces satisfy either condition $(a)$ of Theorem \ref{theo2} or condition $(b)$ of Theorem \ref{theo2} with $\zeta = \mathfrak{c}$ (for instance, a concrete example of an infinite measure space satisfying condition $(b)$ is a set of cardinality greater than $\mathfrak{c}$ endowed with the counting measure).

\section{$L_{p}(\Omega)-L_{q}(\Omega)$ may fail to be maximal spaceable for $p>q$}

As we have proved in the previous section, $L_{p}(\Omega)-\bigcup\limits_{1 \leq q<p}L_{q}(\Omega)$ is maximal spaceable in most cases. Nevertheless, in this section we prove that there exist (quite exotic) infinite measure spaces $(\Omega, \Sigma, \mu)$ such that the larger set $L_{p}(\Omega)-L_{q}(\Omega)$, $ q < p$, fails to be maximal spaceable. Actually we prove much more: given $1 \leq  q < p $ and cardinal numbers $\kappa > \zeta \geq \mathfrak{c}$, we construct an infinite measure space $(\Omega, \Sigma, \mu)$ such that:\\
 (i) $\dim(L_p(\Omega)) = \kappa$;\\
  (ii) $\zeta$ is the maximal dimension of a closed subspace of $L_p(\Omega)$ contained (except for the null vector) in $L_p(\Omega) - L_q(\Omega)$.   

%
%
%

It is worth mentioning that the construction depends on the results of Sections 2 and 3.

\begin{lemma}\label{lemaum} Let $\zeta$ be a cardinal number such that $\zeta \geq \mathfrak{c}$, let $X_\zeta$ be a set such that $\#X_\zeta = \zeta$ and let the set ${\cal P}(X_\zeta)$ of all subsets of $X_\zeta$ be endowed with the counting measure. Then $\dim(L_p(X_\zeta)) = ent(X_\zeta)= \zeta$ for every $0 < p < \infty$.
\end{lemma}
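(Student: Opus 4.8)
The plan is to first unwind what the entropy of $(X_\zeta,{\cal P}(X_\zeta),\#)$ actually is, and then to invoke Corollary \ref{cordimen} directly. First I would observe that, under the counting measure, $\Sigma_{fin}$ is precisely the collection of all \emph{finite} subsets of $X_\zeta$, and that for $A,B\in\Sigma_{fin}$ the condition $\#\bigl((A-B)\cup(B-A)\bigr)=0$ holds if and only if $A=B$, since the counting measure of a nonempty set is a positive integer. Hence the equivalence relation $\sim$ coincides with equality, and $\nicefrac{\Sigma_{fin}}{\sim}$ is in bijection with the set of all finite subsets of $X_\zeta$.

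Next I would compute the cardinality of the set of finite subsets of $X_\zeta$. Since $\zeta\geq\mathfrak{c}\geq\aleph_0$ is infinite, an $n$-element subset is, up to the (finitely many) orderings of its elements, an element of $X_\zeta^{\,n}$, and $\zeta^{\,n}=\zeta$ for every $n\in\mathbb{N}$; summing over $n$ gives $\aleph_0\cdot\zeta=\zeta$ many finite subsets. Therefore $ent(X_\zeta)=\#\nicefrac{\Sigma_{fin}}{\sim}=\zeta$.

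Finally, since $ent(X_\zeta)=\zeta\geq\mathfrak{c}$, Corollary \ref{cordimen} immediately yields $\#L_p(X_\zeta)=ent(X_\zeta)=\dim(L_p(X_\zeta))$, so $\dim(L_p(X_\zeta))=ent(X_\zeta)=\zeta$ for every $0<p<\infty$. (Alternatively one could distinguish the cases $\zeta>\mathfrak{c}$ and $\zeta=\mathfrak{c}$ and apply parts (a) and (b) of Theorem \ref{thmdim}, respectively, to reach the same conclusion.) There is very little friction in this argument: the only mildly non-routine ingredient is the cardinal-arithmetic identity that an infinite set of cardinality $\zeta$ has exactly $\zeta$ finite subsets, which is standard; the rest is identifying $\nicefrac{\Sigma_{fin}}{\sim}$ and citing Corollary \ref{cordimen}.
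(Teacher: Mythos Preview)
Your proof is correct and follows essentially the same path as the paper's: identify $\Sigma_{fin}$ with the finite subsets of $X_\zeta$, show $ent(X_\zeta)=\zeta$, and then invoke the dimension results of Section~2. The only cosmetic differences are that you observe directly that $\sim$ is equality (so the quotient is literally the set of finite subsets, of cardinality $\zeta$), whereas the paper bounds $ent(X_\zeta)\leq\#\Sigma_{fin}=\zeta$ from above and $\zeta\leq ent(X_\zeta)$ from below via singletons; and you cite Corollary~\ref{cordimen} while the paper cites Theorem~\ref{thmdim}, which amounts to the same thing.
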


\begin{proof} For $A \subset X_\zeta$, it is clear that $A\in {\cal P}(X_\zeta)_{fin}$ if and only if $\# A<\infty$, so $\# {\cal P}(X_\zeta)_{fin}= \# X_\zeta$. It follows that $ent(X_\zeta) =\# \nicefrac{{\cal P}(X_\zeta)_{fin}}{\sim}\leq \#X_\zeta$. On the other hand, different  singletons belong to different classes in $ {\cal P}(X_\zeta)_{fin}$, therefore $\# X_\zeta\leq ent(X_\zeta)$. Combining this with $ent(X_\zeta) = \# X_\zeta = \zeta \geq \mathfrak{c}$, by Theorem \ref{thmdim} we have $\dim(L_p(X_\zeta)) = ent(X_\zeta) = \# X_\zeta = \zeta$.
%
\end{proof}

The key to the proof of the following lemma was communicated to the authors by L. Bernal-Gonz\'alez.

\begin{lemma}\label{lemabernal} For every cardinal number $\kappa \geq \mathfrak{c}$ there exists a probability space $(T_\kappa, \Sigma_\kappa, \mu_\kappa)$ such that $\dim(L_p(T_\kappa)) = ent(T_\kappa) = \kappa$ for every $0 < p < \infty$.
\end{lemma}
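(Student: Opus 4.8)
The plan is to realize $(T_\kappa,\Sigma_\kappa,\mu_\kappa)$ as a product of $\kappa$ copies of a fixed two-point probability space, so that the measure algebra becomes large enough to force $ent(T_\kappa)=\kappa$ while $L_p$ still has exactly that dimension by Theorem \ref{thmdim}. Concretely, I would let $\Lambda$ be an index set with $\#\Lambda=\kappa$, put $T_\kappa=\{0,1\}^\Lambda$, let $\Sigma_\kappa$ be the product $\sigma$-algebra, and let $\mu_\kappa$ be the product of the uniform measures $\tfrac12\delta_0+\tfrac12\delta_1$; then $\mu_\kappa(T_\kappa)=1$, so it is a probability space and in particular $(T_\kappa,\Sigma_\kappa,\mu_\kappa)\in\Sigma_{fin}$ with finite total measure. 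By Theorem \ref{thmdim} it suffices to show $ent(T_\kappa)=\kappa$, since $\kappa\ge\mathfrak c$ gives $\dim(L_p(T_\kappa))=ent(T_\kappa)$ in cases (a) and (b) of that theorem (and, when $\kappa=\mathfrak c$, case (b) combined with the obvious lower bound).

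The core of the argument is the lower bound $ent(T_\kappa)\ge\kappa$, and this is where the communicated idea enters. For each coordinate $\lambda\in\Lambda$ consider the cylinder set $E_\lambda=\{x\in T_\kappa:x_\lambda=1\}$, which has $\mu_\kappa(E_\lambda)=\tfrac12$. The key point is that distinct coordinates give sets lying in distinct $\sim$-classes: if $\lambda\ne\lambda'$, then $\mu_\kappa(E_\lambda\triangle E_{\lambda'})=\mu_\kappa(\{x_\lambda\ne x_{\lambda'}\})=\tfrac12\ne 0$, so $[E_\lambda]\ne[E_{\lambda'}]$ in $\nicefrac{\Sigma_{fin}}{\sim}$. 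This produces an injection $\Lambda\hookrightarrow\nicefrac{\Sigma_{fin}}{\sim}$, hence $ent(T_\kappa)\ge\#\Lambda=\kappa$.

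For the upper bound $ent(T_\kappa)\le\kappa$ I would invoke the standard fact that the product $\sigma$-algebra on $\{0,1\}^\Lambda$ is generated by the $\kappa$ cylinders $E_\lambda$, together with the cardinality bound: a $\sigma$-algebra generated by $\kappa\ge\aleph_0$ sets has cardinality at most $\kappa^{\aleph_0}$, and since $\kappa\ge\mathfrak c=2^{\aleph_0}$ this equals $\kappa$. Thus $\#\Sigma_\kappa\le\kappa$, hence $ent(T_\kappa)=\#\nicefrac{(\Sigma_\kappa)_{fin}}{\sim}\le\#\Sigma_\kappa\le\kappa$. Combining the two bounds, $ent(T_\kappa)=\kappa$, and Theorem \ref{thmdim} then yields $\dim(L_p(T_\kappa))=ent(T_\kappa)=\kappa$ for every $0<p<\infty$.

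The step I expect to require the most care is the cardinality estimate $\#\sigma(\mathcal G)\le(\#\mathcal G)^{\aleph_0}$ for an infinite generating family $\mathcal G$: this is a transfinite-induction argument on the Borel hierarchy up to $\omega_1$, counting countable unions and complements at each level. It is a classical set-theoretic fact, so I would cite it (e.g.\ from a standard descriptive set theory or measure theory reference) rather than reprove it; everything else — that $\mu_\kappa$ is a genuine probability measure, that the cylinders separate classes, and the final application of Theorem \ref{thmdim} — is routine.
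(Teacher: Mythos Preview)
Your approach mirrors the paper's almost exactly: both construct the product of $\kappa$ copies of a fixed probability space (the paper uses $[0,1]$ with Lebesgue measure where you use $\{0,1\}$ with the fair coin --- a harmless simplification), both obtain the lower bound $ent(T_\kappa)\ge\kappa$ by exhibiting $\kappa$ pairwise $\sim$-inequivalent cylinder sets of measure $\tfrac12$, both bound $ent(T_\kappa)$ above by the cardinality of the product $\sigma$-algebra, and both finish with Theorem~\ref{thmdim}.

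There is, however, a genuine gap in your upper-bound step. The estimate $\#\sigma(\mathcal G)\le(\#\mathcal G)^{\aleph_0}$ is correct, but the inference ``since $\kappa\ge\mathfrak c=2^{\aleph_0}$ this equals $\kappa$'' is not: $\kappa\ge 2^{\aleph_0}$ does \emph{not} imply $\kappa^{\aleph_0}=\kappa$. By K\"onig's theorem every cardinal of countable cofinality satisfies $\kappa^{\aleph_0}>\kappa$; for instance $\kappa=\beth_\omega$ (or $\kappa=\aleph_\omega$ whenever $\mathfrak c<\aleph_\omega$) gives $\kappa>\mathfrak c$ yet $\kappa^{\aleph_0}>\kappa$. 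So your argument, as written, does not establish $\#\Sigma_\kappa\le\kappa$ (hence $ent(T_\kappa)\le\kappa$) for such $\kappa$. The paper does not attempt this cardinal arithmetic itself; it invokes \cite[Problem~23, Chapter~12]{Komjath} directly for the conclusion $\#\Sigma_\kappa=\kappa$. You should either consult that reference to see precisely what it delivers, or replace the crude bound on $\#\Sigma_\kappa$ by an argument bounding the \emph{measure algebra} $\nicefrac{\Sigma_\kappa}{\sim}$ itself by $\kappa$.
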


\begin{proof} Let $\Gamma$ be a set with $\# \Gamma= \kappa$. Let $T_\kappa$ be the product of $\kappa$ copies of $[0,1]$, that is $T_\kappa = \displaystyle\prod_{\gamma \in \Gamma}[0,1]$, and let $\Sigma_\kappa$ be the product $\sigma$-algebra of the Borel $\sigma$-algebra on $[0,1]$, that is, the $\sigma$-algebra on $T_\kappa$ generated by the inverse images of Borel subsets of $[0,1]$ by the projections in each coordinate (cf. \cite[Definition 9.1]{bauer}, \cite[Definition 22.2]{Hewitt-Stromberg}). 
By \cite[Section 22]{Hewitt-Stromberg} there exists a probability measure $\mu_\kappa$ on $\Sigma_\kappa$ such that if $A = \displaystyle \prod_{\gamma \in\Gamma}A_\gamma$, where $A_\gamma = [0,1]$ except for $\gamma = \gamma_i$, $i = 1, \ldots, n$, then $\mu_\kappa(A) = m(A_{\gamma_1}) \cdot \cdots \cdot m(A_{\gamma_n})$, where $m$ is the Lebesgue measure. Since $\kappa \geq \mathfrak{c}$, $\Sigma_\kappa$ is generated by $\kappa \times \mathfrak{c} = \kappa$ sets, by \cite[Problem 23, Chapter 12]{Komjath} it follows that
$\# \Sigma_\kappa = \kappa$ and, {\it a fortiori}, $ent(T_\kappa) \leq \kappa$. On the other hand, for $\gamma_i, \gamma_j \in \Gamma$, $\gamma_i \neq \gamma_j$, setting $A_{\gamma_i} = B_{\gamma_j} = [0, \frac12 ]$, the sets $A = \displaystyle \prod_{\gamma \in\Gamma}A_\gamma$, where $A_\gamma = [0,1]$ for every $\gamma \neq \gamma_i$, and $B = \displaystyle \prod_{\gamma \in\Gamma}B_\gamma$, where $B_\gamma = [0,1]$ for every $\gamma \neq \gamma_j$, belong to different classes in $\nicefrac{(\Sigma_\kappa)_{fin}}{\sim}$. This shows that $\kappa \leq ent(T_\kappa)$. By Theorem \ref{thmdim} we have $\dim(L_p(T_\kappa)) = ent(T_\kappa) = \kappa$.
\end{proof}

\begin{definition}\label{defnova} \rm Let $\zeta, \kappa \geq \mathfrak{c}$ be cardinal numbers. Consider the measure spaces \\$(X_\zeta, {\cal P}(X_\zeta), \nu)$ of Lemma \ref{lemaum}, where $\nu$ is the counting measure, and $(T_\kappa, \Sigma_\kappa, \mu_\kappa)$ of Lemma \ref{lemabernal}. Of course $X_\zeta$ can be chosen in such a way that $X_\zeta \cap T_\kappa = \emptyset$. Consider the measure space $(Y, \mathcal{A}, \lambda)$, where

\begin{enumerate}
\item[1)] $Y =  T_\kappa \cup X_\zeta $,
\item[2)] $\mathcal{A} =\{B\cup C : B\in \Sigma_\kappa \mathrm{~and~} C\in{\cal P}(X_\zeta)\}$, and
\item[3)] $\lambda(B\cup C)=\mu_\kappa(B)+\nu(C)$ for all $B \in\Sigma_\kappa$ and $C \in{\cal P}(X_\zeta)$.
\end{enumerate}
\end{definition}

A subset $A$ of a topological vector space $E$ is {\it $\eta$-lineable} ({\it $\eta$-spaceable}, respectively), where $\eta$ is a cardinal number, if $A \cup \{0\}$ contains a (closed, respectively) $\eta$-dimensional subspace of $E$.

\begin{theorem} Let $\zeta, \kappa$ be cardinal numbers such that $\kappa > \zeta \geq \mathfrak{c}$, let $(Y, \mathcal{A}, \lambda)$ be the measure space of Definition \ref{defnova} and let $1 \leq q < p$. Then:\\
{\rm (i)} $\dim(L_p(Y)) = \kappa$;\\
 {\rm (ii)} $L_p(Y) - L_q(Y)$ is $\zeta$-spaceable but is not $\eta$-lineable for any cardinal number $\eta > \zeta$.

 In particular, $L_p(Y) - L_q(Y)$ fails to be maximal spaceable.
\end{theorem}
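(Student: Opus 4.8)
The plan is to exploit the ``disjoint-sum'' structure $Y=T_\kappa\sqcup X_\zeta$, with $\mu_\kappa$ a probability measure and $\nu$ the counting measure: the factor $T_\kappa$ will raise $\dim L_p(Y)$ to $\kappa$ without ever obstructing membership in $L_q(Y)$, and this asymmetry drives the whole argument. For (i), I would compute $ent(Y)$ directly. Since $T_\kappa\cap X_\zeta=\emptyset$, every $A\in\mathcal A$ is uniquely $B\cup C$ with $B\in\Sigma_\kappa$, $C\subseteq X_\zeta$; as $\mu_\kappa(B)\le 1$, one has $A\in\mathcal A_{fin}$ iff $\#C<\infty$, and $B_1\cup C_1\sim B_2\cup C_2$ iff $\mu_\kappa(B_1\triangle B_2)=0$ and $C_1=C_2$. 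Because $\mu_\kappa$ is finite, $(\Sigma_\kappa)_{fin}=\Sigma_\kappa$, so $ent(Y)=ent(T_\kappa)\cdot\#\{\text{finite subsets of }X_\zeta\}=\kappa\cdot\zeta=\kappa$ (both cardinals are infinite, $\kappa>\zeta$, and $ent(T_\kappa)=\kappa$ by Lemma \ref{lemabernal}). Since $ent(Y)=\kappa\ge\mathfrak c$, Corollary \ref{cordimen} yields $\dim L_p(Y)=ent(Y)=\kappa$.

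For the $\zeta$-spaceability in (ii), I would first produce the subspace inside $L_p(X_\zeta)$ and then transport it to $L_p(Y)$. The counting-measure space $X_\zeta$ is $\mathfrak c$-bounded (every member of $\mathcal P(X_\zeta)_{fin}$ is finite, hence has only finitely many subsets), and $ent(X_\zeta)=\zeta\ge\mathfrak c$ by Lemma \ref{lemaum}. If $\zeta>\mathfrak c$ I apply Theorem \ref{theo2}(b) with parameter $\mathfrak c$; if $\zeta=\mathfrak c$, then $L_p(X_\zeta)-L_q(X_\zeta)\neq\emptyset$ (since $q<p$ and $X_\zeta$ is infinite) and I apply Theorem \ref{theo2}(a). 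In either case $L_p(X_\zeta)-\bigcup_{1\le q'<p}L_{q'}(X_\zeta)$ is maximal spaceable, hence it contains, off $0$, a closed subspace $F_0\subseteq L_p(X_\zeta)$ with $\dim F_0=\dim L_p(X_\zeta)=\zeta$, so in particular $F_0\setminus\{0\}\subseteq L_p(X_\zeta)-L_q(X_\zeta)$. Since $X_\zeta\in\mathcal A$, extension by zero is an isometric isomorphism of $L_p(X_\zeta)$ onto the closed subspace $\{f\in L_p(Y):f=0\text{ a.e.\ on }T_\kappa\}$ of $L_p(Y)$, and for $g\in L_p(X_\zeta)$ one has $\int_Y|\tilde g|^q\,d\lambda=\int_{X_\zeta}|g|^q\,d\nu$, so $\tilde g\in L_q(Y)$ iff $g\in L_q(X_\zeta)$. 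Thus the image of $F_0$ is a closed $\zeta$-dimensional subspace of $L_p(Y)$ contained, off $0$, in $L_p(Y)-L_q(Y)$.

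For the non-$\eta$-lineability (and the ``in particular''), the crucial point is that $\mu_\kappa$ finite forces $L_p(T_\kappa)\subseteq L_q(T_\kappa)$ by Theorem \ref{newtheo}(b), since $\sup\{\mu_\kappa(A):A\in(\Sigma_\kappa)_{fin}\}=1<\infty$. Decomposing $f=f\chi_{T_\kappa}+f\chi_{X_\zeta}$ for $f\in L_p(Y)$, the $T_\kappa$-component is always in $L_q$, so $f\in L_q(Y)$ iff $f\chi_{X_\zeta}\in L_q(X_\zeta)$. Given any subspace $V\subseteq L_p(Y)$ with $V\setminus\{0\}\subseteq L_p(Y)-L_q(Y)$, the linear restriction $R\colon V\to L_p(X_\zeta)$, $R(f)=f\chi_{X_\zeta}$, is injective: if $R(f)=0$ then $f=f\chi_{T_\kappa}$ a.e., which lies in $L_p(T_\kappa)\subseteq L_q(T_\kappa)\subseteq L_q(Y)$, forcing $f=0$. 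Hence $\dim V=\dim R(V)\le\dim L_p(X_\zeta)=\zeta$, so $L_p(Y)-L_q(Y)$ is not $\eta$-lineable for any $\eta>\zeta$; combined with $\dim L_p(Y)=\kappa>\zeta$ from (i), no closed subspace of maximal dimension can lie in $L_p(Y)-L_q(Y)$, so it fails to be maximal spaceable. The conceptual crux is exactly this asymmetry reduction to $X_\zeta$; everything else (uniqueness of $A=B\cup C$, the behaviour of $\sim$ and $\lambda$ on the disjoint union, the extension-by-zero isometry, and verifying the hypotheses of Theorems \ref{theo2} and \ref{newtheo}) is routine bookkeeping.
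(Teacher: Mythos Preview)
Your proof is correct and follows essentially the same approach as the paper: compute $ent(Y)$ from the disjoint-union structure, invoke Theorem \ref{theo2} on $X_\zeta$ (case (a) or (b) according as $\zeta=\mathfrak c$ or $\zeta>\mathfrak c$) for the $\zeta$-spaceability and transport via extension by zero, and use the inclusion $L_p(T_\kappa)\subseteq L_q(T_\kappa)$ from Theorem \ref{newtheo}(b) to bound the dimension of any subspace inside $(L_p(Y)-L_q(Y))\cup\{0\}$ by restricting to $X_\zeta$. Your direct verification that the restriction map $R$ has trivial kernel is a mild streamlining of the paper's version, which arrives at the same conclusion through a cardinality/pigeonhole argument (if $\dim V>\zeta$ then $\#V>\zeta=\#L_p(X_\zeta)$, so two distinct elements of $V$ have the same image, and their difference lies in $\ker R$); the underlying idea is identical.
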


\begin{proof} (i) By Lemmas \ref{lemaum} and \ref{lemabernal} we have
$$ent(Y) = ent(T_\kappa) \times ent(X_\zeta) = \kappa \times \zeta = \kappa $$
because $c \leq \zeta < \kappa$. Thus $\dim(L_p(Y)) = ent(Y) = \kappa$ by Theorem \ref{thmdim}.\\
(ii) Of course each function $0 \neq f \in L_p(Y)$ can be written as $f=f\cdot \chi_{T_\kappa}+ f\cdot\chi_{X_\zeta}$. Assume, for a while, that there is a subspace $V$ of dimension greater than $\zeta$ inside $(L_p(Y)-L_q(Y))\cup\{0\}$. In that case, consider the projection $$\pi \colon V\longrightarrow L_p(X_\zeta)~,~\pi(f)=f|_{X_\zeta}.$$
So, $$V=\textstyle\bigcup\limits_{g\in \pi(V)} \pi^{-1}(\{g\}).$$
By Lemma \ref{lemaum} we know that $ent(X_\zeta)= \zeta \geq \mathfrak{c}$, thus $\# L_p(X_\zeta)= ent(X_\zeta) =\zeta$ by Corollary \ref{cordimen}. The dimension of $V$ being greater than $\zeta$ implies that the cardinality of $V$ is also greater than $\zeta$. But $V$ is the union of at most $\zeta$ sets of the form $\pi^{-1}(\{g\})$ because $$\# \pi(V) \leq \#L_p(X_\zeta)= \zeta.$$ So there is $g \in \pi(V)$ such that the set $\pi^{-1}(\{g\})$ has cardinality greater than 1. Then there are $f,h \in V$, $h\neq f$, such that $\pi(f)=g=\pi(h)$, hence $f\cdot \chi_{X_\zeta}=h\cdot \chi_{X_\zeta}.$
Finally, $$0\neq f-h=f \cdot \chi_{T_\kappa}- h \cdot\chi_{T_\kappa} = (f-h)\cdot \chi_{T_\kappa}.$$
We know that $(f - h) \in L_p(Y)$, so $(f-h)\cdot \chi_{T_\kappa} \in L_p(T_\kappa)$. Since $\mu_\kappa(T_\kappa)=1$, by Theorem \ref{newtheo}(b) we have $L_p(T_\kappa) \subset L_q(T_\kappa)$. So $(f-h)\cdot \chi_{T_\kappa} \in L_q(T_\kappa)$, therefore $(f-h) = (f-h)\cdot \chi_{T_\kappa}\in L_q(Y)$. But $V$ is a linear subspace, so $(f - h) \in V$, which is not possible because $V\subset (L_p(Y)-L_q(Y))\cup\{0\}$.
So there is no subspace $V$ of dimension greater than $\zeta$ inside $L_p(Y)-L_q(Y)$.

Now let us prove that there is a closed $\zeta$-dimensional subspace of $L_p(Y)$ inside $(L_p(Y)-L_q(Y))\cup\{0\}$. If $\mathfrak{c}= \zeta$, then $ent(X_\zeta)=\mathfrak{c}$, so $(L_p(X_\zeta)-L_q(X_\zeta))\cup\{0\}$ contains a closed $\dim(L_p(X_\zeta))$-dimensional subspace $V$ of $L_p(X_\zeta)$ by Theorem \ref{theo2}(a). And if $\mathfrak{c}< \zeta$, then $ent(X_\zeta)=\zeta>\mathfrak{c}$. Since every set of finite measure in $X$ is a finite set, we conclude that $X_\zeta$ is $\mathfrak{c}$\,-bounded. In this case, $(L_p(X_\zeta)-L_q(X_\zeta))\cup\{0\}$ contains a closed $\dim(L_p(X_\zeta))$-dimensional subspace $V$ of $L_p(X)$ by Theorem \ref{theo2}(b).

Therefore, in any case there is a closed $\dim(L_p(X_\zeta))$-dimensional subspace $V$ of $L_p(X_\zeta)$ inside $(L_p(X_\zeta)-L_q(X_\zeta))\cup\{0\}$, with $\dim(V)=M$. It is plain that the correspondence
$$f \in L_p(X_\zeta) \mapsto \widetilde f \in L_p(Y)~,~\widetilde f(x) = \left\{\begin{array}{cl} f(x), & {\rm ~if~} x\in X_\zeta, \\ 0, & {\rm ~if~}  x \in T_{\kappa},  \end{array} \right. $$ is a linear embedding, so $L_p(X_\zeta)$ can be regarded as a closed subspace of $L_p(Y)$. By Theorem \ref{newtheo}(a) we know that $L_q(X_\zeta) \subseteq L_p(X_\zeta)$, so $L_p(X_\zeta)\cap L_q(Y)=L_q(X_\zeta)$. It follows that $L_p(X_\zeta)-L_q(X_\zeta)\subset L_p(Y)-L_q(Y)$. Therefore there is a copy of $V$ inside $(L_p(Y)-L_q(Y))\cup\{0\}$.
\end{proof}

\bigskip

\noindent{\bf Acknowledgement.} The authors thank L. Bernal-Gonz\'alez for showing us how to prove Lemma \ref{lemabernal} and for many other helpful suggestions.
%
%
%
%


\begin{bibdiv}
\begin{biblist}

\bib{AGM_2001}{article}{
   author={Aron, Richard M.},
   author={Garc{\'{\i}}a, Domingo},
   author={Maestre, Manuel},
   title={Linearity in non-linear problems},
   language={English, with English and Spanish summaries},
   journal={RACSAM. Rev. R. Acad. Cienc. Exactas F\'\i s. Nat. Ser. A Mat.},
   volume={95},
   date={2001},
   number={1},
   pages={7--12},
}

\bib{AGPS_2009}{article}{
   author={Aron, R. M.},
   author={Garc\'{i}a-Pacheco, F. J.},
   author={P\'{e}rez-Garc\'{i}a, D.},
   author={Seoane-Sep\'{u}lveda, J. B.},
   title={On dense-lineability of sets of functions on $\mathbb{R}$},
   journal={Topology},
   volume={48},
   date={2009},
   pages={149--156},
}

\bib{AGZ_2000}{article}{
   author={Aron, Richard M.},
   author={Gonzalo, Raquel},
   author={Zagorodnyuk, Andriy},
   title={Zeros of real polynomials},
   journal={Linear and Multilinear Algebra},
   volume={48},
   date={2000},
   number={2},
   pages={107--115},
}

\bib{AGS_2005}{article}{
   author={Aron, R. M.},
   author={Gurariy, V. I.},
   author={Seoane-Sep\'{u}lveda, J. B.},
   title={Lineability and spaceability of sets of functions on $\Bbb R$},
   journal={Proc. Amer. Math. Soc.},
   volume={133},
   date={2005},
   number={3},
   pages={795--803},
}

\bib{APS_2006}{article}{
   author={Aron, Richard M.},
   author={P{\'e}rez-Garc{\'{\i}}a, David},
   author={Seoane-Sep{\'u}lveda, Juan B.},
   title={Algebrability of the set of non-convergent Fourier series},
   journal={Studia Math.},
   volume={175},
   date={2006},
   number={1},
   pages={83--90},
}

\bib{AS_2007}{article}{
   author={Aron, R. M.},
   author={Seoane-Sep\'{u}lveda, J. B.},
   title={Algebrability of the set of everywhere surjective functions on $\Bbb C$},
   journal={Bull. Belg. Math. Soc. Simon Stevin},
   volume={14},
   date={2007},
   number={1},
   pages={25--31},
}

\bib{livroBanach}{book}{
   author={Banach, S.},
   title={Th\'eorie des op\'erations lin\'eaires},
   publisher={Warszawa},
   date={1932},
   pages={139},
   isbn={978-0542-78798-0},
}

\bib{Banach-Mazur}{article}{
   author={Banach, S.},
   author={Mazur, S.},
   title={Zur Theorie der linearen Dimension},
   journal={Studia Math.},
   volume={4},
   date={1933},
   pages={100--112},
}

\bib{BG_2006}{article}{
   author={Bandyopadhyay, Pradipta},
   author={Godefroy, Gilles},
   title={Linear structures in the set of norm-attaining functionals on a Banach space},
   journal={J. Convex Anal.},
   volume={13},
   date={2006},
   number={3-4},
   pages={489--497},
}

\bib{BBFP_2012}{article}{
   author={Barroso, C.S.},
   author={Botelho, G.},
   author={F\'{a}varo, V. V.},
   author={Pellegrino, D.},
   title={Lineability and spaceability for the weak form of Peano's theorem and vector-valued sequence spaces},
   journal = {Proc. Amer. Math. Soc.},
   status={In Press},
}

\bib{Bartle}{book}{
   author={Bartle, R. G.},
   title={The elements of integration and Lebesgue measure},
   publisher={John Wiley \& Sons},
   date={1995},
   pages={139},
   isbn={978-0542-78798-0},
}

\bib{bauer}{book}{
   author={Bauer, Heinz},
   title={Probability theory},
   series={de Gruyter Studies in Mathematics},
   volume={23},
   note={Translated from the fourth (1991) German edition by Robert B. Burckel and revised by the author},
   publisher={Walter de Gruyter \& Co.},
   place={Berlin},
   date={1996},
   pages={xvi+523},
}

\bib{Bernalstudia2010}{article}{
   author={Bernal-Gonz{\'a}lez, Luis},
   title={Algebraic genericity of strict-order integrability},
   journal={Studia Math.},
   volume={199},
   date={2010},
   number={3},
   pages={279--293},
}

\bib{BO_2012}{article}{
   author={Bernal-Gonz{\'a}lez, Luis},
   author={Ord{\'o}{\~n}ez Cabrera, Manuel},
   title={Spaceability of strict order integrability},
   journal={J. Math. Anal. Appl.},
   volume={385},
   date={2012},
   number={1},
   pages={303--309},
}

\bib{BDFP_2011}{article}{
   author={Botelho, G.},
   author={Diniz, D.},
   author={F\'{a}varo, V. V.},
   author={Pellegrino, D.},
   title={Spaceability in Banach and quasi-Banach sequence spaces},
   journal={Linear Algebra Appl.},
   volume={434},
   date={2011},
   number={5},
   pages={1255--1260},
}

\bib{BDP_2009}{article}{
   author={Botelho, G.},
   author={Diniz, D.},
   author={Pellegrino, D.},
   title={Lineability of the set of bounded linear non-absolutely summing
   operators},
   journal={J. Math. Anal. Appl.},
   volume={357},
   date={2009},
   number={1},
   pages={171--175},
}

\bib{BFPS_2012}{article}{
     author={Botelho, G.},
     author={F\'{a}varo, V. V.},
     author={Pellegrino, D.},
     author={Seoane-Sep\'{u}lveda, J. B.},
     title={$L_{p}[0,1] - \cup_{q>p} L_{q}[0,1]$ is spaceable for every $p>0$},
     journal={Linear Algebra Appl.},
     volume={436},
     date={2012},
     number={9},
     pages={2963--2965},
}

\bib{BMP_2010}{article}{
     author={Botelho, G.},
     author={Matos, M.},
     author={Pellegrino, D.},
     title={Lineability of summing sets of homogeneous polynomials},
    journal={Linear Multilinear Algebra},
   volume={58},
   date={2010},
   number={1-2},
   pages={61--74},
}

\bib{Diestel}{book}{
   author={Diestel, J.},
   title={Sequence and series in Banach spaces},
   publisher={Springer},
   date={1984},
   pages={139},
   isbn={978-0542-78798-0},
}

\bib{EGS_2012}{article}{
   author={Enflo, Per H.},
   author={Gurariy, Vladimir I.},
   author={Seoane-Sep\'{u}lveda, J. B.},
   title={Some Results and Open Questions on Spaceability in Function Spaces},
   journal={Trans. Amer. Math. Soc.},
   status={In Press},
}

\bib{Folland}{book}{
   author={Folland, G.},
   title={Real Analysis: modern techniques and their applications},
   publisher={John Wiley \& Sons},
   date={1999},
   pages={139},
   isbn={978-0542-78798-0},
}

\bib{FGK_1999}{article}{
   author={Fonf, V. P.},
   author={Gurariy, V. I.},
   author={Kadets, M. I.},
   title={An infinite dimensional subspace of $C[0,1]$ consisting of nowhere
   differentiable functions},
   journal={C. R. Acad. Bulgare Sci.},
   volume={52},
   date={1999},
   number={11-12},
   pages={13--16},
}

\bib{G_2011}{article}{
   author={G\'{a}mez-Merino, J. L.},
   TITLE = {Large algebraic structures inside the set of surjective functions},
   journal={Bull. Belg. Math. Soc. Simon Stevin},
   volume={18},
   date={2011},
   pages={297--300},
}

\bib{GMPS_2012}{article}{
   author={G\'{a}mez-Merino, J. L.},
   author={Mu\~{n}oz-Fern\'{a}ndez, G. A.},
   author={Pellegrino, D.},
   author={Seoane-Sep\'{u}lveda, J. B.},
   title={Bounded and unbounded polynomials and multilinear forms: Characterizing continuity},
   journal={Linear Algebra Appl.},
   volume={436},
   date={2012},
   pages={237--242},
}

\bib{GMSS_2010}{article}{
    author={G\'{a}mez-Merino, J. L.},
    author={Mu\~{n}oz-Fern\'{a}ndez, G. A.},
    author={S\'{a}nchez, V. M.},
    author={Seoane-Sep\'{u}lveda, J. B.},
    title = {Sierpi\'nski-Zygmund functions and other problems on lineability},
    journal = {Proc. Amer. Math. Soc.},
    volume={138},
    date={2010},
    number={11},
    pages={3863--3876},
}

\bib{GMS_2010}{article}{
   author={G\'{a}mez-Merino, Jos\'{e} L.},
   author={Mu\~{n}oz-Fern\'{a}ndez, Gustavo A.},
   author={Seoane-Sep\'{u}lveda, Juan B.},
   title={Lineability and additivity in $\Bbb R^{\Bbb R}$},
   journal={J. Math. Anal. Appl.},
   volume={369},
   date={2010},
   number={1},
   pages={265--272},
}

\bib{GMS_2011}{article}{
   author={G{\'a}mez-Merino, Jos{\'e} L.},
   author={Mu{\~n}oz-Fern{\'a}ndez, Gustavo A.},
   author={Seoane-Sep{\'u}lveda, Juan B.},
   title={A characterization of continuity revisited},
   journal={Amer. Math. Monthly},
   volume={118},
   date={2011},
   number={2},
   pages={167--170},
}

\bib{GGMS_2010}{article}{
   author={Garc{\'{\i}}a, D.},
   author={Grecu, B. C.},
   author={Maestre, M.},
   author={Seoane-Sep\'{u}lveda, J. B.},
   title={Infinite dimensional Banach spaces of functions with nonlinear properties},
   journal={Math. Nachr.},
   volume={283},
   date={2010},
   number={5},
   pages={712--720},
}

\bib{GMS_2009}{article}{
   author={Garc{\'{\i}}a-Pacheco, F. J.},
   author={Mart{\'{\i}}n, M.},
   author={Seoane-Sep\'{u}lveda, J. B.},
   title={Lineability, spaceability, and algebrability of certain subsets of
   function spaces},
   journal={Taiwanese J. Math.},
   volume={13},
   date={2009},
   number={4},
   pages={1257--1269},
}

\bib{GPS_2010}{article}{
   author={Garc{\'{\i}}a-Pacheco, F. J.},
   author={P\'{e}rez-Eslava, C.},
   author={Seoane-Sep\'{u}lveda, J. B.},
   title={Moduleability, algebraic structures, and non-linear properties},
   journal={J. Math. Anal. Appl.},
   volume={370},
   date={2010},
   number={1},
   pages={159--167},
}

\bib{G_1966}{article}{
   author={Gurariy, V. I.},
   title={Subspaces and bases in spaces of continuous functions (Russian)},
   journal={Dokl. Akad. Nauk SSSR},
   volume={167},
   date={1966},
   pages={971--973},
}

\bib{G_1991}{article}{
   author={Gurariy, V. I.},
   title={Linear spaces composed of nondifferentiable functions},
   journal={C.R. Acad. Bulgare Sci.},
   volume={44},
   date={1991},
   pages={13--16},
}

\bib{GQ_2004}{article}{
    author={Gurariy, Vladimir I.},
   author={Quarta, Lucas},
   title={On lineability of sets of continuous functions},
   journal={J. Math. Anal. Appl.},
   volume={294},
   date={2004},
   number={1},
   pages={62--72},
}

\bib{HOS}{article}{
   author={Haydon, R.},
   author={Odell, E.}
   author={Schlumprecht, T.},
   title={Small subspaces of $L_p$},
   journal={Ann of Math. (2)},
   volume={173},
   date={2011},
   pages={169--209},
}

\bib{H_2000}{article}{
   author={Hencl, Stanislav},
   title={Isometrical embeddings of separable Banach spaces into the set of
   nowhere approximatively differentiable and nowhere H\"{o}lder functions},
   journal={Proc. Amer. Math. Soc.},
   volume={128},
   date={2000},
   number={12},
   pages={3505--3511},
}

\bib{Hewitt-Stromberg}{book}{
   author={Hewitt, E.},
   author={Stromberg, K.}
   title={Real and Abstract Analysis},
   publisher={Springer-Verlag},
   date={1965},
   pages={139},
   isbn={978-0542-78798-0},
}

\bib{JMS_2012}{article}{
   author={Jim\'{e}nez-Rodr\'iguez, P.},
   author={Mu\~{n}oz-Fern\'{a}ndez, G. A.},
   author={Seoane-Sep\'{u}lveda, J. B.},
   title={Non-Lipschitz functions with bounded gradient and related problems},
   journal={Linear Algebra Appl.},
   volume={437},
   date={2012},
   number={4},
   pages={1174--1181},
}

\bib{Komjath}{book}{
   author={Komjath, P.},
   author={Totik, V.}
   title={Problems and Theorems in Classical Set Theory},
   publisher={Springer},
   date={2006},
   pages={139},
   isbn={978-0542-78798-0},
}

\bib{KT_2011}{article}{
   author={Kitson, Derek},
   author={Timoney, Richard M.},
   title={Operator ranges and spaceability},
   journal={J. Math. Anal. Appl.},
   volume={378},
   date={2011},
   number={2},
   pages={680--686},
}

\bib{LM_1940}{article}{
   author={Levine, B.},
   author={Milman, D.},
   title={On linear sets in space $C$ consisting of functions of bounded
   variation},
   language={Russian, with English summary},
   journal={Comm. Inst. Sci. Math. M\'{e}c. Univ. Kharkoff [Zapiski Inst. Mat. Mech.] (4)},
   volume={16},
   date={1940},
   pages={102--105},
}

\bib{L_1968}{article}{
   author={Lindenstrauss, J.},
   title={On subspaces of Banach spaces without quasi-complements},
   journal={Israel J. Math.},
   volume={6},
   date={1968},
   pages={36--38},
}

\bib{MPPS_2008}{article}{
   author={Mu\~{n}oz-Fern\'{a}ndez, G. A.},
   author={Palmberg, N.},
   author={Puglisi, D.},
   author={Seoane-Sep\'{u}lveda, J. B.},
   title={Lineability in subsets of measure and function spaces},
   journal={Linear Algebra Appl.},
   volume={428},
   date={2008},
   number={11-12},
   pages={2805--2812},
}

\bib{PT_2009}{article}{
   author={Pellegrino, Daniel},
   author={Teixeira, Eduardo V.},
   title={Norm optimization problem for linear operators in classical Banach
   spaces},
   journal={Bull. Braz. Math. Soc. (N.S.)},
   volume={40},
   date={2009},
   number={3},
   pages={417--431},
}

\bib{R_1995}{article}{
   author={Rodr{\'{\i}}guez-Piazza, L.},
   title={Every separable Banach space is isometric to a space of continuous
   nowhere differentiable functions},
   journal={Proc. Amer. Math. Soc.},
   volume={123},
   date={1995},
   number={12},
   pages={3649--3654},
}

\bib{romero_1983}{article}{
   author={Romero, Juan L.},
   title={When is $L^{p}(\mu )$ contained in $L^{q}(\mu )$?},
   journal={Amer. Math. Monthly},
   volume={90},
   date={1983},
   number={3},
   pages={203--206},
}

\bib{R_1968}{article}{
   author={Rosenthal, H. P. },
   title={On quasi-complemented subspaces of Banach spaces},
   journal={Proc. Nat. Acad. Sci. U.S.A.},
   volume={59},
   date={1968},
   pages={361--364},
}

\bib{S_2006}{book}{
   author={Seoane-Sep\'{u}lveda, Juan B.},
   title={Chaos and lineability of pathological phenomena in analysis},
   note={Thesis (Ph.D.)--Kent State University},
   publisher={ProQuest LLC, Ann Arbor, MI},
   date={2006},
   pages={139},
   isbn={978-0542-78798-0},
}

\bib{sub_1978}{article}{
   author={Subramanian, B.},
   title={On the inclusion $L^{p}(\mu )\subset L^{q}(\mu )$},
   journal={Amer. Math. Monthly},
   volume={85},
   date={1978},
   number={6},
   pages={479--481},
}

\end{biblist}
\end{bibdiv}


\bigskip

\noindent Geraldo Botelho, Daniel Cariello, and Vin\'icius V. F\'avaro\\
Faculdade de Matem\'{a}tica\\
Universidade Federal de Uberl\^{a}ndia\\
38.400-902 -- Uberl\^{a}ndia -- Brazil\\
e-mails: botelho@ufu.br, dcariello@famat.ufu.br, vvfavaro@gmail.com

\bigskip

\noindent Daniel Pellegrino\\
Departamento de Matem\'{a}tica\\
Universidade Federal da Para\'{\i}ba\\
58.051-900 -- Jo\~{a}o Pessoa -- Brazil\\
e-mail: dmpellegrino@gmail.com

\bigskip

\noindent Juan B. Seoane-Sep\'ulveda\\
Departamento de An\'{a}lisis Matem\'{a}tico\\
Facultad de Ciencias Matem\'{a}ticas\\
Plaza de Ciencias 3\\
Universidad Complutense de Madrid\\
Madrid -- 28040 -- Spain\\
e-mail: jseoane@mat.ucm.es

\end{document}